\newcommand{\be}{\begin{equation} }
\newcommand{\ee}{\end{equation} }
\newcommand{\bee}{\begin{eqnarray} }
\newcommand{\eee}{\end{eqnarray} }
\newcommand{\pa}{\partial}
\newcommand{\la}{\lambda}
\renewcommand{\Re}{{\operatorname{Re}\,}}
\numberwithin{equation}{section}
\newtheorem{Lem}{Lemma}
\newtheorem{Thm}{Theorem}
\newtheorem*{claim}{Claim}
\newtheorem{remark}{Remark}
\newtheorem{corollary}{Corollary}
\newtheorem{proposition}{Proposition}
\newcommand{\set}[1]{\ensuremath{\{#1\}}}
\newcommand{\EE}[1]{\mathbb E}
\newcommand{\leb}{\ensuremath{\lambda}}
\newcommand{\ep}{\ensuremath{\epsilon}}
\newcommand{\eps}{\ensuremath{\epsilon}}
\newcommand{\R}{\ensuremath{\mathbb R}}
\newcommand{\C}{\ensuremath{\mathbb C}}
\newcommand{\w}{\ensuremath{\omega}}
\newcommand{\W}{\ensuremath{\Omega}}
\newcommand{\setst}[2]{\ensuremath{\left\{#1\,\middle|\,#2\right\}}}
\newcommand{\abs}[1]{\left\lvert #1 \right\rvert}
\newcommand{\norm}[1]{\left\lVert#1\right\rVert}
\newcommand{\gives}{\ensuremath{\rightarrow}}
\newcommand{\x}{\ensuremath{\times}}
\newcommand{\lr}[1]{\ensuremath{\left(#1\right)}}
\renewcommand{\Re}{\ensuremath{\mathrm{Re} }}
\newcommand{\dell}{\ensuremath{\partial}}
\newcommand{\Z}{{\mathbb Z}}
\newcommand{\hcal}{\mathcal H}
\newcommand{\Rcal}{\mathcal R}
\newcommand{\lbr}{\llbracket}
\newcommand{\rbr}{\rrbracket}
\begin{document}
\author{Thomas Beck, Spencer Becker-Kahn, Boris Hanin}

\address{Department of Mathematics, MIT, Cambridge, MA 02139, USA}
\email[B. Hanin]{bhanin@mit.edu}
\email[T. Beck]{tdbeck@mit.edu}
\email[S. T. Becker-Kahn]{stbeckerkahn@mit.edu}

\thanks{BH is partially supported by NSF grant DMS-1400822.}

\begin{abstract}
We show that on a compact Riemmanian manifold $(M,g)$, nodal sets of linear combinations of any $p+1$ smooth functions form an admissible $p-$sweepout provided these linear combinations have uniformly bounded vanishing order. This applies in particular to finite linear combinations of Laplace eigenfunctions. As a result, we obtain a new proof of the Gromov, Guth, Marques--Neves upper bounds on the min-max $p$-widths of $M.$ We also prove that close to a point at which a smooth function on $\mathbb{R}^{n+1}$ vanishes to order $k$, its nodal set is contained in the union of $k$ $W^{1,p}$ graphs for some $p > 1$. This implies that the nodal set is locally countably $n$-rectifiable and has locally finite $\mathcal{H}^n$ measure, facts which also follow from a previous result of B\"ar. Finally, we prove the continuity of the Hausdorff measure of nodal sets under heat flow. 
\end{abstract}

\title[Smooth Functions with Finite Vanishing Order]{Nodal Sets of Smooth Functions with Finite Vanishing Order and $p$-Sweepouts}
\setcounter{section}{0}
\maketitle

\section{Introduction}
This article concerns the regularity of nodal sets in families of smooth functions with finite vanishing order. Our motivation comes in part from the work of Marques-Neves \cite{marquesnevesexistence}, who use admissible $p$-sweepouts in a compact Riemmanian manifold $(M,g)$ to prove the existence of infinitely many closed minimal hypersurfaces if $M$ has positive Ricci curvature. Each admissible $p$-sweepout is essentially a $p$-dimensional family of co-dimension $1$ cycles in $M$ (see \S \ref{SS:Psweepout}), and the associated min-max $p$-widths $\w_p(M)$ (see Theorem \ref{T:UpperBound}) can be thought of as giving a non-linear version of the spectrum of the Laplacian. An analogy like this was first proposed by Gromov in \cite{gromovdimension}.

Marques-Neves suggested in \cite[\S 9]{marquesnevesexistence} that one might push this analogy further by considering $p$-sweepouts formed from the nodal sets of linear combinations of Laplace eigenfunctions. However, it was not clear at the time that a $p$-dimensional family of cycles defined in this way would satisfy the technical conditions needed to be admissible as a $p$-sweepout. In Theorem \ref{T:Psweepout} we provide a general construction of admissible $p-$sweepouts from the nodal sets of families of smooth functions that have uniformly bounded vanishing order. Our construction applies in particular to finite linear combinations of eigenfunctions. Theorem \ref{T:Psweepout} also yields a new proof of the Weyl-type upper bounds on the $p-$widths $\w_p(M).$



To view a family of nodal sets as an admissible $p$-sweepout, one must control the extent to which the nodal sets can concentrate in small balls in $M$ (see \S \ref{SS:Psweepout}). Estimates that provide this control follow both from the new general $W^{1,p}$ regularity result that we present here, Theorem \ref{T:Main}, and from previous work of B\"ar \cite{barzero} (see Proposition \ref{P:Bar}). Both Theorem \ref{T:Main} and Proposition \ref{P:Bar} imply that near a point of finite vanishing order, the nodal set of a smooth function on $\mathbb{R}^{n+1}$ is countably $n$-rectifiable and has locally finite $\mathcal{H}^n$ measure (see \S \ref{S:W1p} and \S \ref{S:nonconc}). They also allow us to study in \S \ref{S:Heat} the evolution of nodal sets for families 
\[\set{e^{-t\Delta_g}u}_{t\geq 0},\qquad u\in L^2(M,g)\]
under heat flow. 


\subsection{Regularity and measure of nodal sets for families of smooth functions}
By a result of Whitney \cite{whitneyanalytic}, every closed subset of $\R^{n+1}$ is the nodal set $Z_f=f^{-1}(0)$ of some smooth real-valued function $f$. This means that, in general, $Z_f$ can be arbitrarily irregular. Constraints on the derivatives of $f$ restrict the possible behavior of $Z_f,$ however. For example, if $f(x)=0$ and $\nabla f(x)\neq 0,$ then, by the implicit function theorem, $Z_f$ is a smooth manifold near $x$. 

Solutions of elliptic or parabolic PDEs satisfy more sophisticated constraints that allow for quantitative estimates on Hausdorff measures of nodal and singular sets. Early results in this setting are due to Carleman \cite{carlemanunicite}, who established finite vanishing order for solutions to second order elliptic equations. His method strongly influenced later work. Further developments of particular note include the work of Garafalo--Lin \cite{garofalolinmonotonicity, garofalolinunique} on elliptic equations and Lin \cite{linuniqueness} for parabolic equations, with the strongest quantitative results by Hardt-Simon \cite{hardtsimonnodal}, Donnelly-Fefferman \cite{donfeffnodal}, Naber-Valtorta \cite{nabervaltortasingular}, and recently Logunov \cite{logunovupper, logunovnadirashvili}, Logunov-Malinnikova \cite{logmallowdim}. 

We are concerned here, however, with what can be about $Z_f$ if $f$ vanishes to finite order but does not necessarily satisfy a PDE. Lin showed in \cite{linnodal} that such functions include finite linear combinations of Laplace eigenfunctions (alternative proofs were given by Donnelly \cite{donnellynodalsums} and Jerison--Lebeau \cite{jerisonlebeaunodal}). Jerison--Kenig \cite{jerisonkenigunique} also obtained similar statements about solutions to certain differential inequalities.

Throughout, $f$ is a smooth function. Therefore, it has finite vanishing order in an open set $U\subseteq \R^{n+1}$ if for each $x \in U$ there exists a multi-index $\alpha$ for which $D^{\alpha} f(x) \neq 0$. If $|\alpha| = \gamma$, and $D^{\beta}f(x) = 0$ for all multi-indices $\beta$ with $|\beta|<\gamma$, then $f$ is said to have vanishing order $\gamma$ at $x$. The following was shown by B\"{a}r:

\begin{proposition}[\cite{barzero}, Lemma $3$] \label{P:Bar} Let $f : \R^{n+1} \to \R$ be smooth and suppose that $f$ vanishes to order $\gamma$ at $x_0 \in \R^{n+1}$. Then there is $\bar{r} > 0$ and a hyperplane $P\subseteq \R^{n+1}$ such that $Z_f \cap B_{\bar{r}}(x_0)$ is contained in the union of countably many graphs of smooth real-valued functions from $P\cap B_{\bar{r}}(x_0)$ to $P^\perp$. Moreover, we can estimate the Hausdorff measure of the nodal set by
\[
\hcal^n(Z_f \cap B_r(x_0)) \leq (n+1)2^{n}\gamma r^n
\] 
for all $r < \bar{r}$.
\end{proposition}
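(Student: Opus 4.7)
The plan is to begin by translating so that $x_0 = 0$ and writing the Taylor expansion $f = P_\gamma + R$, where $P_\gamma$ is the nonzero homogeneous polynomial of degree $\gamma$ corresponding to the lowest nonvanishing jet of $f$ and $R(x) = O(|x|^{\gamma+1})$. Since $P_\gamma \not\equiv 0$, the set $\{v \in S^n : P_\gamma(v) \neq 0\}$ is open and dense, so after rotating coordinates we may take $P_\gamma(e_{n+1}) \neq 0$ and set $P = e_{n+1}^\perp$. Because $\pa_{n+1}^\gamma f(0)$ is a nonzero multiple of $P_\gamma(e_{n+1})$, continuity gives a ball $B_{\bar r}(0)$ on which $\pa_{n+1}^\gamma f$ stays bounded away from zero. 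Iterating Rolle's theorem $\gamma$ times then shows that for each $x' \in P$ with $|x'| < \bar r$ the function $t \mapsto f(x', t)$ has at most $\gamma$ zeros in the vertical slice through $x'$.

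The second step is to exhibit the countable family of smooth graphs. I would stratify $Z_f \cap B_{\bar r}(0)$ by vertical vanishing order: for each $k = 1, \dots, \gamma$ let
\[
A_k = \{ p \in Z_f \cap B_{\bar r}(0) : \pa_{n+1}^j f(p) = 0 \text{ for } j < k,\ \pa_{n+1}^k f(p) \neq 0\}.
\]
Each point $p \in A_k$ lies in the level set $\{\pa_{n+1}^{k-1} f = 0\}$, and since $\pa_{n+1}^k f(p) \neq 0$ the implicit function theorem realizes this level set near $p$ as a smooth graph over an open subset of $P$. By second countability each $A_k$ admits a countable subcover by such graphs, and the union over $k = 1, \dots, \gamma$ covers all of $Z_f \cap B_{\bar r}(0)$.

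The third step is the Hausdorff measure estimate. Choose an orthonormal basis $v_1, \dots, v_{n+1}$ of $\R^{n+1}$ with $P_\gamma(v_i) \neq 0$ for each $i$; this is possible by the openness noted above. Applying the Rolle argument in each direction $v_i$ shows that every line parallel to $v_i$ meeting $B_{\bar r}(0)$ intersects $Z_f$ in at most $\gamma$ points. The second step makes $Z_f \cap B_{\bar r}(0)$ countably $n$-rectifiable with an $\hcal^n$-a.e.\ unit normal $\nu$, so the area formula applied to $\pi_{v_i}$ and the elementary inequality $\sum_i |\langle \nu, v_i\rangle| \geq \bigl(\sum_i \langle \nu, v_i\rangle^2\bigr)^{1/2} = 1$ give
\[
\hcal^n(Z_f \cap B_r(0)) \leq \sum_{i=1}^{n+1} \int_{v_i^\perp} \#\bigl(\pi_{v_i}^{-1}(y) \cap Z_f \cap B_r(0)\bigr) \, dy \leq (n+1) \gamma \cdot 2^n r^n,
\]
where the last step uses that each projection $\pi_{v_i}(B_r(0))$ is an $n$-ball of radius $r$, contained in an $n$-cube of side $2r$.

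The main obstacle I anticipate is making the stratification step truly yield graphs over the single fixed hyperplane $P$ rather than over varying hyperplanes depending on the stratum. The crux is that the implicit function theorem applied to $\pa_{n+1}^{k-1} f$ at a point of $A_k$ produces a graph in the $e_{n+1}$-direction precisely because $\pa_{n+1}(\pa_{n+1}^{k-1} f) = \pa_{n+1}^k f$ is nonzero there; keeping track of this uniformly in $k$ and patching the countably many local graphs into a coherent covering of $Z_f \cap B_{\bar r}(0)$ is the only delicate point. Once this is in hand, the measure estimate reduces cleanly to the Rolle-plus-Cauchy argument above.
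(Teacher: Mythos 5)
Your proof is correct, and it reaches the same conclusions as B\"ar's Lemma~3 by a more elementary route than the one the paper attributes to the cited reference. The paper indicates that B\"ar's proof invokes the Malgrange preparation theorem to write $f$ near $x_0$ as a non-vanishing smooth factor times a monic degree-$\gamma$ polynomial in $x_{n+1}$ with smooth coefficients, from which the bound of $\gamma$ zeros per vertical line and the root-graph description are read off immediately. You replace this heavy tool with a direct Taylor expansion and an iterated Rolle argument to control the zeros per line, then stratify $Z_f\cap B_{\bar r}$ by the vertical vanishing order $k$ and apply the implicit function theorem to $\partial_{n+1}^{k-1}f$ to obtain the countable cover by smooth graphs; the integral-geometric estimate for $\mathcal{H}^n$ via the area formula and the inequality $\sum_i|\langle\nu,v_i\rangle|\geq 1$ is essentially the same kind of slicing argument one would also do after Malgrange. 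Your version trades the appeal to Malgrange for carrying the Rolle iteration explicitly; both give the same cover and the same constant.

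The obstacle you flag at the end is not actually one. The implicit function theorem applied to $\partial_{n+1}^{k-1}f$ at a point of $A_k$ produces a graph in the fixed $e_{n+1}$-direction over $P=e_{n+1}^\perp$ for every $k$, precisely because the non-degeneracy you use is $\partial_{n+1}\bigl(\partial_{n+1}^{k-1}f\bigr)=\partial_{n+1}^k f\neq 0$; the hyperplane never varies with the stratum, and a countable subcover for each $k$ followed by the union over $k=1,\dots,\gamma$ is all that is needed. Two small points should be made explicit to be airtight: (i) if you want the graph functions to be defined on all of $P\cap B_{\bar r}(x_0)$ rather than on open pieces of it, extend each local graph function smoothly to $P$ (e.g.\ restrict to a slightly smaller closed domain and refine the cover); and (ii) $\bar r$ should be fixed after choosing the orthonormal frame $v_1,\dots,v_{n+1}$ with $P_\gamma(v_i)\neq 0$, so that $\partial_{v_i}^\gamma f$ is bounded away from zero on $B_{\bar r}$ for all $i$ simultaneously --- this is possible since $\partial_{v_i}^\gamma f(0)=\gamma!\,P_\gamma(v_i)\neq 0$, and taking $v_{n+1}=e_{n+1}$ lets the same ball serve both the graph construction and the measure bound.
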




The radius $\bar{r}$ in Proposition \ref{P:Bar} can be chosen uniformly over $\theta\in \Theta$ for families
\begin{equation}
f^\theta(x):=F(x,\theta)\label{E:Family}
\end{equation}
where $F\in C^\infty(U\x \Theta)$, the set $\Theta$ is a finite-dimensional compact smooth manifold (possibly with boundary), and $U\subseteq \R^{n+1}$ is open. Denoting by $\Gamma_u$ the graph of a function $u,$ we obtain the following regularity result. 





\begin{Thm}\label{T:Main}
Let $f^\theta$ be as in \eqref{E:Family} and suppose that the vanishing order of $f^{\theta_0}$ at $x_0\in U$ is $\gamma < \infty$. Then there is $p > 1$, a ball $B_{\bar{r}}(x_0)$ about $x_0$, a neighbourhood $V_{\theta_0}$ of $\theta_0$, and a hyperplane $P \subset \R^{n+1}$ such that
\[
\{f^{\theta} = 0\} \cap B_{\bar{r}}(x_0) \subset \bigcup_{i=1}^{\gamma} \Gamma_{f^{\theta}_i} \qquad \text{for every}\ \theta \in V_{\theta_0}, 
\]
where the functions $f^{\theta}_i$ belong to $W^{1,p}(P,P^{\perp})$ for $\theta \in V_{\theta_0}$, $i=1,\dots,\gamma$ and
\[ \sup_{\substack{i= 1,\dots,\gamma,\\ \theta \in V_{\theta_0}}} \norm{f^{\theta}_i}_{W^{1,p}}  < \infty.\] 
\end{Thm}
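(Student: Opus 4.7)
The plan is to choose coordinates adapted to the vanishing direction, apply the Malgrange $C^\infty$ preparation theorem to reduce $f^\theta$ to a Weierstrass polynomial of degree $\gamma$ in the last variable, and finally invoke a Sobolev regularity result for the real roots of a monic polynomial with smooth coefficients.

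To set up: since $f^{\theta_0}$ vanishes to exact order $\gamma$ at $x_0$, its degree-$\gamma$ Taylor polynomial is nonzero, so some $\gamma$-th directional derivative is nonzero. After an orthogonal change of variables and translating, assume $x_0 = 0$ and $\partial_{n+1}^\gamma f^{\theta_0}(0) \neq 0$, and set $P := \{x_{n+1} = 0\}$. By joint continuity of the derivatives of $F$, $\partial_{n+1}^\gamma F$ remains bounded away from zero on some neighbourhood of $(0,\theta_0)$. I would then apply the Malgrange preparation theorem to $F(x', x_{n+1}, \theta)$, treating $x_{n+1}$ as the distinguished variable and $(x',\theta)$ as smooth parameters: after shrinking, one obtains a smooth non-vanishing factor $q$ and smooth coefficients $a_i(x',\theta)$ with $a_i(0,\theta_0) = 0$ such that
\[
F(x', x_{n+1}, \theta) \;=\; q(x,\theta)\Bigl[\,x_{n+1}^\gamma + \sum_{i=1}^\gamma a_i(x',\theta)\, x_{n+1}^{\gamma - i}\,\Bigr] \;=:\; q(x,\theta)\, P^\theta(x', x_{n+1}).
\]
On the resulting neighbourhood $\{f^\theta = 0\} \cap B_{\bar{r}}(x_0)$ coincides with the real zero set of the monic polynomial $P^\theta(x',\cdot)$, which for each $(x',\theta)$ has at most $\gamma$ real roots.

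Next, take a continuous ordered selection $f_1^\theta(x') \leq \cdots \leq f_\gamma^\theta(x')$ of those real roots, extended to all of $P \cap B_{\bar{r}}(x_0)$ by a suitable convention (for example via the real parts of a continuous complex root selection obtained by applying Kato's perturbation theorem to the companion matrix of $P^\theta$). By construction $\{f^\theta = 0\} \cap B_{\bar{r}}(x_0) \subset \bigcup_{i=1}^\gamma \Gamma_{f_i^\theta}$, and the remaining task is the uniform $W^{1,p}$ bound. Away from the discriminant locus $\{\Delta^\theta(x') = 0\}$ the implicit function theorem applies and yields
\[
\nabla_{x'} f_i^\theta \;=\; -\,\frac{\nabla_{x'} P^\theta}{\partial_{n+1} P^\theta}\bigg|_{x_{n+1} = f_i^\theta(x')},
\]
so the integrability question reduces to controlling the $L^p$ norm of this quotient near the set where roots collide.

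The hard step is precisely this last estimate. I would invoke (or adapt the proofs of) regularity results for roots of polynomials with smooth coefficients due to Bronshtein and, in sharper form, Parusi\'nski--Rainer and Ghisi--Gobbino, which assert that any continuous ordered selection of the real roots of a monic polynomial of degree $\gamma$ with sufficiently smooth coefficients belongs locally to $W^{1,p}$ for every $1 \leq p < \gamma/(\gamma - 1)$, with norm controlled by finitely many derivatives of the coefficients. Since the $a_i(\cdot,\theta)$ are jointly smooth in $(x',\theta)$, compactness of a closed subneighbourhood of $\theta_0$ in $V_{\theta_0}$ then gives
\[
\sup_{\substack{i = 1,\dots,\gamma,\\ \theta \in V_{\theta_0}}}\,\|f_i^\theta\|_{W^{1,p}} \,<\, \infty
\]
for any $p > 1$ strictly below $\gamma/(\gamma-1)$. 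The analytic heart of this final step is a measure estimate on the near-discriminant set: one must show that on typical horizontal slices, $1/|\partial_{n+1} P^\theta|$ is integrable to a power slightly greater than one. The exponent $\gamma/(\gamma-1)$ is sharp, as already seen in the model $x_{n+1}^\gamma = x_1$, where the graph function $x_1 \mapsto x_1^{1/\gamma}$ has derivative just failing to lie in $L^{\gamma/(\gamma-1)}$ at the origin.
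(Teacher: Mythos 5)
Your proposal follows essentially the same two-step strategy as the paper: Malgrange preparation to reduce locally to a monic degree-$\gamma$ Weierstrass polynomial with coefficients smooth in $(x',\theta)$, followed by the Parusi\'nski--Rainer Sobolev regularity theorem for roots. The logical skeleton is correct. Two points where the details differ from, or fall slightly short of, what is needed. First, your appeal to Kato's perturbation theorem for the continuous selection of roots is misplaced: Kato's theorem concerns one-parameter families, and for a multi-parameter family of non-normal matrices (or equivalently of complex roots) there is in general no continuous single-valued branch. What saves the day is that you only need a continuous selection of a \emph{real-valued} unordered tuple (after taking real parts), and for real tuples the order statistics give a canonical continuous selection; the paper cites De Lellis--Grisanti--Tilli for exactly this, but the elementary order-statistics observation already suffices and you should state it that way rather than invoking Kato.

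Second, the Parusi\'nski--Rainer result cited in the paper (their Theorem 3.5) is stated for polynomials with coefficients depending smoothly on a \emph{one-dimensional} compact interval. To deduce the multivariate $W^{1,p}$ bound on the ordered selection over the $n$-dimensional base $P\cap B_{\bar r}(x_0)$, the paper restricts to lines parallel to each coordinate axis, applies the one-dimensional result on each line (with the line and $\theta$ jointly playing the role of the indexing set $\mathcal N$ so the bound is uniform), observes that on each line the ordered selection is a pointwise permutation of the restriction of the global selection so their derivative $L^p$-norms agree after summing over the index, and then integrates over the transverse directions by Fubini. Your sketch jumps directly to the multi-dimensional conclusion. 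If you instead cite a genuinely multi-dimensional version of the root-regularity theorem (as exists in later work of Parusi\'nski--Rainer and of Ghisi--Gobbino, which also pins down the sharp exponent $p<\gamma/(\gamma-1)$ that you mention), that is legitimate, but you should say explicitly that this is what you are doing; as written, the one-to-many-dimensions passage is an unacknowledged gap.
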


Our proof of Theorem \ref{T:Main}, which does not rely on Proposition \ref{P:Bar}, is given in \S \ref{S:ProofMain}. The main technical input is the work of Par\'usinski--Rainer \cite[Theorem 3.5]{parusinskirainerregularity} on the regularity of roots of smooth families of polynomials (see Theorem \ref{T:ParusinskiRainer} below). The $W^{1,p}$ regularity is optimal if one is given a continuous parametrization of the nodal set of a smooth function over a given hyperplane (e.g. $f(x,y) = y^q - x$ for some $q > 1$ and the hyperplane $\{y=0\}$). It is possible that one could make a  `good' choice of hyperplane and establish better regularity of the functions $f^{\theta}_i$. 

 \subsection{Nodal sets as p-sweepouts}\label{S:SweepoutApp}
As part of an analogy suggested by Gromov \cite{gromovdimension} between the min-max $p$-widths $\omega_p(M)$ of a compact Riemannian manifold $(M,g)$ (see \cite[Definition 4.3]{marquesnevesexistence} or \cite[Appendix 3]{guthminimax}) and the eigenvalues of the Laplacian $\Delta_g$, Marques and Neves proposed in \cite[Section 9]{marquesnevesexistence} studying $p$-sweepouts given by nodal sets of linear combinations of eigenfunctions. We show here that one can indeed construct admissible $p$-sweepouts in this way (in the sense of \cite[\textsection 4.2]{marquesnevesexistence}). In fact, we prove the following stronger result:

\begin{Thm}\label{T:Psweepout} Let $(M^{n+1},g)$ be a smooth, compact Riemannian manifold and suppose that $f_0,\dots,f_p \in C^{\infty}(M, \R)$ satisfy the following property: There exists $\gamma > 0$ such that for every $x_0 \in M$ and every $(\theta_0,\dots,\theta_p) \in \R^{p+1}\setminus\{0\}$, the vanishing order of $\theta_0f_0 + \dots + \theta_pf_p$ at $x_0$ is at most $\gamma$. Then the map
\begin{align*}
\Phi\ :\ \R\mathbb{P}^p\ &\to\ \mathcal{Z}_n(M,\mathbb{Z}_2)\\
[\theta_0:\dots:\theta_p]\ &\mapsto\ \partial\{\theta_0f_0  + \dots + \theta_pf_p< 0\}
\end{align*}
is an admissible $p$-sweepout.
\end{Thm}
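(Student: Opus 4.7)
The plan is to verify the three conditions defining an admissible $p$-sweepout in the sense of \cite[\S 4.2]{marquesnevesexistence}: (a) $\Phi$ descends to a flat-continuous map on $\R\mathbb{P}^p$ with values in $\mathcal{Z}_n(M;\Z_2)$; (b) the family $\{\Phi(\theta)\}$ has no concentration of mass, i.e.\ $\lim_{r\to 0}\sup_{\theta,x}\hcal^n(Z_{f^\theta}\cap B_r(x))=0$ where $f^\theta:=\theta_0 f_0+\cdots+\theta_p f_p$; and (c) the pullback $\Phi^*(\bar\lambda^p)$ is nonzero in $H^p(\R\mathbb{P}^p;\Z_2)$, where $\bar\lambda$ generates $H^1(\mathcal{Z}_n(M;\Z_2);\Z_2)\cong\Z_2$. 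The descent to $\R\mathbb{P}^p$ is immediate, since $\{-f^\theta<0\}=\{f^\theta>0\}$ differs from the complement of $\{f^\theta<0\}$ only on the Lebesgue-null set $Z_{f^\theta}$, and both sub-level sets have the same mod-$2$ boundary current.

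First I would establish (b) directly from Proposition \ref{P:Bar}. By hypothesis every $f^\theta$ with $\theta\in S^p$ has vanishing order at most $\gamma$ at every $x\in M$, and inspection of the proof of Proposition \ref{P:Bar}, combined with a compactness argument on $S^p\times M$, produces a uniform $\bar r>0$ such that
\[ \hcal^n(Z_{f^\theta}\cap B_r(x))\leq(n+1)\,2^n\,\gamma\,r^n\qquad\text{for all }\theta\in S^p,\ x\in M,\ r<\bar r. \]
Since the mass measure of the mod-$2$ $n$-current $\partial\{f^\theta<0\}$ equals $\hcal^n$ restricted to $Z_{f^\theta}$, this gives (b) and, via a fixed finite cover of $M$ by balls of radius $\bar r$, also a uniform bound on $\mathbf{M}(\Phi(\theta))$; in particular $\Phi(\theta)\in\mathcal{Z}_n(M;\Z_2)$ is well-defined.

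Next I would prove flat continuity in (a). Writing $A_\theta:=\llbracket\{f^\theta<0\}\rrbracket$, if $\theta_k\to\theta$ in $S^p$ then $f^{\theta_k}\to f^\theta$ in $C^0(M)$, so $A_{\theta_k}\triangle A_\theta\subseteq\{|f^\theta|\leq\norm{f^{\theta_k}-f^\theta}_\infty\}$. Because $Z_{f^\theta}$ has Lebesgue measure zero, dominated convergence gives $\Vol(\{|f^\theta|<\epsilon\})\to 0$ for each fixed $\theta$, and a straightforward compactness argument using continuous dependence of $f^\theta$ on $\theta$ upgrades this to uniform convergence over $S^p$. Hence $\mathbf{M}(A_{\theta_k}-A_\theta)\to 0$, and the standard bound $\mathcal{F}(\partial S)\leq\mathbf{M}(S)$ applied to $\partial(A_{\theta_k}-A_\theta)=\Phi(\theta_k)-\Phi(\theta)$ gives flat continuity.

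For (c), I would use the loop $\alpha:[0,1]\to\R\mathbb{P}^p$ defined by $\alpha(t)=[\cos(\pi t):\sin(\pi t):0:\cdots:0]$; its lift to $S^p$ joins antipodal points, so $\alpha$ represents the generator of $\pi_1(\R\mathbb{P}^p)\cong\Z_2$. Under the Almgren isomorphism $\pi_1(\mathcal{Z}_n(M;\Z_2))\cong H_{n+1}(M;\Z_2)\cong\Z_2$, the class of $\Phi\circ\alpha$ is represented by the $(n+1)$-chain $A_{\alpha(1)}-A_{\alpha(0)}=\llbracket\{f_0>0\}\rrbracket-\llbracket\{f_0<0\}\rrbracket=\llbracket M\rrbracket$, whose class in $H_{n+1}(M;\Z_2)$ is non-zero. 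Therefore $\Phi^*(\bar\lambda)$ generates $H^1(\R\mathbb{P}^p;\Z_2)$, and by the ring structure $H^*(\R\mathbb{P}^p;\Z_2)=\Z_2[x]/(x^{p+1})$ it follows that $\Phi^*(\bar\lambda^p)=(\Phi^*\bar\lambda)^p\neq 0$. The main anticipated obstacle lies in step (a): mass convergence of the sub-level sets is clean, but rigorously transferring this to flat continuity of the mod-$2$ boundary currents, and extracting genuinely uniform-in-$\theta$ decay of $\Vol(\{|f^\theta|<\epsilon\})$, is where the uniform vanishing order hypothesis must be deployed most carefully.
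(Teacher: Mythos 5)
Your proposal is correct and follows essentially the same route as the paper: reduce $\Phi$ to mod-$2$ boundaries of sub-level sets, derive $\hcal^{n+1}(Z_{f^\theta})=0$ from finite vanishing order, get flat continuity from volume convergence of the symmetric differences (this is the content of the paper's Lemma~\ref{L:Continflat}), obtain the non-concentration condition (iii) from B\"ar's bound (Proposition~\ref{P:Bar} and its family version, Proposition~\ref{T:NonConcentration}), and verify the cohomological conditions by evaluating the Almgren isomorphism on the loop $[\cos:\sin:0:\cdots:0]$ and telescoping the sum $\sum_j A_j$ down to $\llbracket\{f_0>0\}\rrbracket+\llbracket\{f_0<0\}\rrbracket=[M]$, exactly the computation in the paper's display~\eqref{E:AlmgrenSum}.

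Two small remarks. First, your reformulation of conditions (i)--(ii) into the single condition $\Phi^*(\bar\lambda^p)\neq 0$ with $\lambda := \Phi^*\bar\lambda$ is fine, since $\mathcal Z_n(M,\Z_2)$ is weakly a $K(\Z_2,1)$ and detection of sweepouts by $\bar\lambda$ on $\pi_1$ is then automatic; this is a mild repackaging, not a different argument. Second, the worry you flag at the end --- that one needs \emph{uniform} decay in $\theta$ of $\Vol(\{|f^\theta|<\ep\})$ --- is unnecessary: flat continuity is a pointwise statement, and at a fixed $\theta$ the estimate $\{f^\theta<0<f^{\theta'}\}\subset\{|f^\theta|\leq\|f^{\theta'}-f^\theta\|_\infty\}$ together with $\hcal^{n+1}(Z_{f^\theta})=0$ already suffices. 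The uniformity over $\theta$ supplied by the hypothesis is really exploited in the non-concentration estimate (iii), not in flat continuity. The telescoping Almgren computation is the one place where you should be explicit that you are choosing $A_j=\llbracket\{f^{\theta_{j+1}}<0\}\rrbracket+\llbracket\{f^{\theta_j}<0\}\rrbracket$ over a partition fine enough that $\mathbf M(A_j)$ falls below Almgren's uniqueness threshold, so that the isomorphism is indeed evaluated on this particular filling.
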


\noindent Here, $\mathcal{Z}_n(M,\mathbb{Z}_2)$ is the space of mod 2 flat $n$-cycles in $M$ (see \cite[p. 423]{federergmt}). For the proof see \S \ref{S:SweepoutProof}. Let us write 
\[\Delta_g \varphi_j = \leb_j^2 \varphi_j,\qquad 0=\leb_0<\leb_1<\leb_2\leq \cdots \nearrow \infty\]
for the eigenvalues and eigenfunctions of the Laplacian (with multiplicity). As mentioned above, that non-zero finite linear combinations of the eigenfunctions have finite vanishing order was first proved in \cite[Thm 4.2]{linnodal} and later by different methods in \cite[Thm 14.10]{jerisonlebeaunodal} and \cite[Thm 4.1]{donnellynodalsums}. Thus Theorem \ref{T:Psweepout} applies to linear combinations of the eigenfunctions $\varphi_j$. In the context of $p-$sweepouts, it is therefore natural to define
\[
\Upphi_p(M) := \sup_{\theta \in \R\mathbb{P}^p} \mathbf{M}(\partial\{\theta_0\varphi_0  + \dots + \theta_p\varphi_p< 0\}),
\]
where $\mathbf{M}$ denotes the mass of an element in $\mathcal Z_n(M, \Z/2)$. Combining the Weyl-type lower bounds on $\w_p(M)$ \cite[\S 4.2]{gromovdimension}, \cite[\S 3]{guthminimax} and Theorem \ref{T:Psweepout} gives
\begin{align} \label{E:ineqchain}
c\,p^{\tfrac{1}{n+1}} \leq \omega_p(M) \leq \Upphi_p(M) \leq \sup_{\theta \in \R\mathbb{P}^p} \mathcal{H}^n(\{\theta_0\varphi_0  + \dots + \theta_p\varphi_p = 0\}).
\end{align}
To see the last inequality, we use that if $f$ is a function of finite vanishing order, then $\mathbf{M}(\dell \set{f<0})$ is simply the Hausdorff measure $\mathcal H^n\lr{\dell \set{f<0}}$ of the topological boundary of $\set{f<0}.$ Notice that the linear combination of eigenfunctions $f(x_1,x_2)=1+\cos(x_1)$ on the two-torus $\mathbb T=\R^2/(2\pi \Z)^2$ satisfies 
\[\mathbf{M}(\partial \{f < 0\})=0<\hcal^1(\{f = 0\})=2\pi.\] 
That is, for a general linear combination of eigenfunctions the mass of the associated mod 2 flat chain can be strictly less than the measure of the nodal set because the nodal set can have a large singular part. However, it is not known if the third inequality in \eqref{E:ineqchain} can in fact be strict.

Marques and Neves also raise the question of understanding the exact asymptotic relationship between $\omega_p(M)$ and $\Upphi_p(M)$ as $p \to \infty$. Their ``asymptotic optimality'' conjecture is that $\Upphi_p(M)/\omega_p(M)$ tends to $1$. 

In the course of proving Theorem \ref{T:Psweepout}, we establish the following:

\begin{corollary}\label{C:Weakhomequiv} Let $(M^{n+1},g)$ be a smooth, compact Riemannian manifold and let $\{\varphi_j\}_{j=1}^{\infty}$ be an orthonormal basis for $L^2(M,g)$ consisting of real-valued eigenfunctions of the Laplacian. The map
\begin{align*}
\Phi\ :\ \R\mathbb{P}^{\infty}\ &\to\ \mathcal{Z}_n(M,\mathbb{Z}_2)\\
[\theta_0:\dots:\theta_p: 0 : 0: \cdots ]\ &\mapsto\ \partial\{\theta_0\varphi_0 + \theta_1\varphi_1 + \dots + \theta_p\varphi_p < 0\}
\end{align*}
is a weak homotopy equivalence.
\end{corollary}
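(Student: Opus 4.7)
The plan is to exhibit $\Phi$ as a weak homotopy equivalence between two Eilenberg--MacLane spaces of type $K(\mathbb{Z}_2,1)$. By a classical theorem of Almgren, the space $\mathcal{Z}_n(M,\mathbb{Z}_2)$ with the flat topology satisfies $\pi_1 \cong \mathbb{Z}_2$ and $\pi_k = 0$ for $k \geq 2$. Since $\R\mathbb{P}^\infty$ is itself a $K(\mathbb{Z}_2,1)$, Whitehead's theorem reduces the corollary to verifying two things: that $\Phi$ is continuous, and that $\Phi$ induces an isomorphism on $\pi_1$.

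Continuity of $\Phi$ is inherited from the continuity of each restriction $\Phi|_{\R\mathbb{P}^p}$, which is part of the content of Theorem \ref{T:Psweepout}. Indeed, $\R\mathbb{P}^\infty$ carries the direct limit topology induced by the standard inclusions $\R\mathbb{P}^p \hookrightarrow \R\mathbb{P}^{p+1}$, and the restrictions $\Phi|_{\R\mathbb{P}^p}$ are compatible with these inclusions directly from the defining formula (setting the extra coordinates to zero). To identify the effect of $\Phi$ on $\pi_1$, I would invoke Theorem \ref{T:Psweepout} with $p=1$: the restriction $\Phi|_{\R\mathbb{P}^1}$ is an admissible $1$-sweepout, meaning $(\Phi|_{\R\mathbb{P}^1})^*(\bar\lambda)$ is nonzero in $H^1(\R\mathbb{P}^1, \mathbb{Z}_2) \cong \mathbb{Z}_2$, where $\bar\lambda \in H^1(\mathcal{Z}_n(M,\mathbb{Z}_2), \mathbb{Z}_2)$ is the generator from the definition of admissibility. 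Because the inclusion $\R\mathbb{P}^1 \hookrightarrow \R\mathbb{P}^\infty$ induces an isomorphism on $H^1(\cdot, \mathbb{Z}_2)$, naturality of pullback forces $\Phi^*(\bar\lambda) \neq 0$ in $H^1(\R\mathbb{P}^\infty, \mathbb{Z}_2) \cong \mathbb{Z}_2$. A nonzero endomorphism of $\mathbb{Z}_2$ is an isomorphism, and via the universal coefficient theorem this translates to $\Phi_*$ being an isomorphism on $\pi_1$, completing the argument.

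The principal obstacle is not the homotopy-theoretic bookkeeping but rather the appeal to Almgren's identification of the weak homotopy type of $\mathcal{Z}_n(M,\mathbb{Z}_2)$; this input lies outside the methods developed in the paper but is essentially unavoidable, since the corollary asserts a global topological property of the $n$-cycle space. An alternative would be to use Theorem \ref{T:Psweepout} for every $p$ simultaneously and show that $\Phi^*$ is an isomorphism of cohomology rings $\mathbb{Z}_2[\bar\lambda] \to \mathbb{Z}_2[\alpha]$ sending $\bar\lambda$ to the generator $\alpha \in H^1(\R\mathbb{P}^\infty, \mathbb{Z}_2)$; however, this still requires Almgren's computation of $H^*(\mathcal{Z}_n(M,\mathbb{Z}_2), \mathbb{Z}_2)$ as an input.
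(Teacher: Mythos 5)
Your proposal follows essentially the same route as the paper: both identify $\R\mathbb{P}^\infty$ and $\mathcal{Z}_n(M,\Z_2)$ as $K(\Z_2,1)$'s (citing Almgren for the latter), observe continuity, and reduce the weak equivalence to showing a generator of $\pi_1(\R\mathbb{P}^\infty)$ maps to a generator of $\pi_1(\mathcal{Z}_n(M,\Z_2))$. The paper does this last step directly on $\pi_1$ by reusing the Claim from the proof of Theorem \ref{T:Psweepout} (the explicit curve $\hat\gamma$ is shown via the Almgren isomorphism to have nontrivial image), whereas you route through $H^1$ and admissibility; the underlying input is the same, since verifying condition \ref{psweepout1} in Theorem \ref{T:Psweepout} is exactly that Claim. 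One small slip: the class $\bar\lambda \in H^1(\mathcal{Z}_n(M,\Z_2),\Z_2)$ is not part of the paper's definition of admissibility (which only posits $\lambda \in H^1(X,\Z_2)$ in the \emph{domain}); you don't actually need $\bar\lambda$, since for $X = \R\mathbb{P}^1$ conditions \ref{psweepout1} and \ref{psweepout2} alone give $\lambda(\gamma) \neq 0$ for the generator $\gamma$, hence $\Phi\circ\gamma$ is a sweepout, hence $\Phi_*[\gamma] \neq 0$, which is an isomorphism $\Z_2 \to \Z_2$. Your explicit treatment of the direct-limit topology on $\R\mathbb{P}^\infty$ is a bit more careful than the paper's passing remark and is a welcome addition.
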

Corollary \ref{C:Weakhomequiv} is proved in \S \ref{S:HomProof}. Finally, we provide a new proof, given in \S \ref{S:UBPf}, of the Weyl-type upper bounds on the min-max p-widths $\w_p(M)$ of a compact smooth manifold $M$, originally established by Guth in \cite[Thm 1]{guthminimax} when $M$ is a closed unit ball and for more general compact manifolds by Marques-Neves in \cite[Thm 5.1]{marquesnevesexistence}. Our argument is similar to the one outlined by Gromov in \cite[\S 4.2B]{gromovdimension}. 

 \begin{Thm}[\cite{gromovdimension, guthminimax, marquesnevesexistence}] \label{T:UpperBound} Let $M$ be a compact smooth manifold $M$ without boundary, and define the min-max p-width $\w_p(M)$ by
 \begin{align*}
 \w_p(M) = \inf_{\Phi \in \mathcal{P}_p} \sup_{x\in X} \mathbf{M}(\Phi(x)),
 \end{align*}
 where the infimum is over admissible $p$-sweepouts $\Phi: X \to \mathcal{Z}_n(M,\mathbb{Z}_2)$. Then, 
 \begin{align*}
 \w_p(M) \leq C\cdot p^{\frac{1}{n+1}}.
 \end{align*}
 \end{Thm}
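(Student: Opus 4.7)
The plan is to apply Theorem \ref{T:Psweepout} to a $(p+1)$-dimensional space of smooth functions on $M$ coming from polynomial restrictions, following Gromov's outline in \cite[\S 4.2B]{gromovdimension}. The point is that Theorem \ref{T:Psweepout} now supplies the technical admissibility ingredient that was not formally available in that sketch; everything else reduces to classical integral geometry.

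By Nash's algebraic realization theorem we may regard $M$ as a compact smooth algebraic subvariety $M \subset \R^N$ of dimension $n+1$. For $d \geq 1$ let $V_d \subset C^\infty(M)$ denote the image of the restriction map from polynomials on $\R^N$ of total degree at most $d$. Standard Hilbert-function estimates for the $(n+1)$-dimensional real algebraic variety $M$ give $\dim V_d = c_M d^{n+1} + O(d^n)$ as $d \to \infty$, so we may choose $d = d(p) \leq C_M \cdot p^{1/(n+1)}$ with $\dim V_d \geq p+1$. An integral-geometric (Cauchy--Crofton) argument, using that a generic affine plane of complementary dimension in $\R^N$ meets $\{P=0\}$ in an algebraic set of degree at most $d$ and that $M$ is compact, gives
\[
\mathcal{H}^n\bigl(\{P = 0\} \cap M\bigr) \leq C(M) \cdot d
\]
for every polynomial $P$ on $\R^N$ of degree at most $d$ with $P|_M \not\equiv 0$.

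Next, fix a basis $f_0,\ldots,f_p$ of a $(p+1)$-dimensional subspace of $V_d$. For each nonzero $\theta \in \R^{p+1}$ the function $f_\theta := \sum_j \theta_j f_j$ is the restriction of a nonzero polynomial to $M$, hence real analytic and of finite vanishing order at every point of $M$. The vanishing order is upper semicontinuous in $(x,[\theta])$, and on the compact parameter space $M \times \R\mathbb{P}^p$ it is therefore bounded above by some $\gamma = \gamma(M,d) < \infty$. Theorem \ref{T:Psweepout} then certifies that
\[
\Phi : \R\mathbb{P}^p \to \mathcal{Z}_n(M, \mathbb{Z}_2), \qquad [\theta] \mapsto \partial\{f_\theta < 0\},
\]
is an admissible $p$-sweepout. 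Because each $f_\theta$ has finite vanishing order, $\mathbf{M}(\Phi([\theta])) \leq \mathcal{H}^n(\{f_\theta = 0\})$, as noted before \eqref{E:ineqchain}. Combining this with the Crofton-type estimate gives
\[
\omega_p(M) \leq \sup_{[\theta]\in\R\mathbb{P}^p} \mathbf{M}(\Phi([\theta])) \leq C(M) \cdot d(p) \leq C_M \cdot p^{1/(n+1)}.
\]

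The two classical inputs---the Hilbert function count and the Cauchy--Crofton volume bound on polynomial nullsets in $M$---require care in tracking the dependence of the constants on the embedding but are essentially well-known. The essential new ingredient, which I expect to be the conceptual obstacle in Gromov's original sketch, is the promotion of the continuous family $[\theta] \mapsto \partial\{f_\theta < 0\}$ of boundary cycles into an admissible $p$-sweepout in the precise Marques--Neves sense; this is exactly what Theorem \ref{T:Psweepout} supplies, given the uniform vanishing order coming from the compactness and upper semicontinuity argument above.
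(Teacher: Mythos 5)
Your proof is correct and gives a valid derivation of the Weyl-type upper bound, but it takes a genuinely different route from the paper's. Both arguments hinge on Theorem \ref{T:Psweepout} to certify admissibility, and both must transfer a bound from a more rigid model of $M$ back to $M$ by a bi-Lipschitz pullback. However, the source of the $p^{1/(n+1)}$ bound differs. You invoke Nash's algebraic realization theorem to regard $M$ as a real algebraic variety, take $V_d$ to be degree-$\leq d$ polynomial restrictions (with $\dim V_d \gtrsim d^{n+1}$ via the Hilbert function), and bound the nodal measure by a Crofton/B\'ezout argument: $\mathcal{H}^n(\{P=0\}\cap M) \leq C d$. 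This is essentially Guth's proof in \cite{guthminimax} made admissible by Theorem \ref{T:Psweepout}, and is the version closest to Gromov's original sketch. The paper instead invokes Whitney's theorem to produce a diffeomorphic real analytic submanifold $N$, takes $V_p$ to be the span of the first $p+1$ Laplace eigenfunctions for a real analytic metric on $N$, and gets the nodal bound directly from the Jerison--Lebeau estimate $\mathcal{H}^n(Z_f)\leq C\lambda$ for sums of eigenfunctions on real analytic manifolds, which combined with Weyl's law yields $Cp^{1/(n+1)}$. The paper's route is shorter given that the Jerison--Lebeau theorem is already in play elsewhere (for finite vanishing order); your route replaces that single analytic input by two classical algebraic-geometric ones (Hilbert function asymptotics and the Crofton bound), and is arguably more self-contained and more faithful to Gromov's original outline. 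Two small points worth tightening: the claim that each nonzero $f_\theta = P|_M$ has finite vanishing order at every point of $M$ uses real analyticity and requires $M$ connected (else a polynomial could vanish identically on a component); the paper's eigenfunction route has the same implicit assumption. And as in the paper, ``regarding $M$ as algebraic'' should be unpacked as a composition with a diffeomorphism; the resulting $p$-sweepout on the original $(M,g)$ is the pullback, and the masses change only by a bi-Lipschitz constant.
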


\subsection{Nodal Sets Under Heat Flow} \label{S:Heat}  Given a function $v \in L^2(M)$, write 
\[v = \sum_{j=0}^{\infty}c_{j}\varphi_{j},\qquad c_{j}\in l^2(\mathbb{N}).\]
For each $\eps>0$, define $N_{\eps}(v) : L^{2}(M) \to [-\infty,\infty]$ by
\begin{align} \label{eqn:tildeN}
N_{\eps}(v) = \log\left(\sum_{j=0}^{\infty}c_j^2e^{\eps {\la_j}}\right).
\end{align}
It follows from both \cite[Theorem 4.3]{linnodal} and \cite[Theorem 14.10]{jerisonlebeaunodal} that if $N_{\eps}(v)$ is finite for some $\eps>0,$ then $v$ has finite vanishing order, bounded by an explicit function of $N_\eps(v)$. Therefore, Theorem \ref{T:Main} also applies to certain infinite linear combinations of eigenfunctions. Let, for instance, $u:M\x \R_+ \gives \R$ solve the heat equation 
\begin{align}
\left(\pa_{t} + \Delta_{g}\right)u(x,t) = 0\label{E:HE}
\end{align}
with initial data $u(x,0) = u_{0}(x)\in L^2(M)$. Suppose that $N_\ep(u_0)<\infty$ for some $\ep>0$.  Writing $\psi = \Pi_{\leb_k}u_0$ for the first non-zero eigenspace projection of $u_0,$ and changing the time variable from $t$ to $\theta=\frac{2}{\pi}\arctan(t),$ we define $f^{\theta}(x)$ by
\[ f^{\theta}(x) := \begin{cases}
e^{-\lambda_k^2\tan(\theta\pi/2)}u\lr{x,\tan(\theta\pi/2)}, &\qquad \theta\in [0,1)\\
\psi(x),& \qquad \theta =1
\end{cases}.\]
It follows from writing $f^{\theta}(x)$ as a Fourier series that $F(x,\theta):=f^{\theta}(x)\in C^\infty(M\x [0,1]).$ 
Setting $\Theta=[0,1],$ it is easy to see that $f^{\theta}(x)$  satisfies 
\begin{align}\label{E:FinFreq}
\sup_{\theta\in\Theta}\inf_{\eps>0} N_{\eps}(f^{\theta}) <\infty,
\end{align}
 since $N_\ep(u_0)<\infty$ for some $\ep>0.$ Therefore, there exists $C>0$ so that
\begin{align}
\sup_{t\geq 0}\hcal^{n}(\{x\in M: u(x,t) = 0\}) \leq C. 
\end{align}
It is natural to compare the nodal set $Z_\theta = \set{f^\theta(x)=0}$ as $\theta \gives 1$ with the nodal set of $\psi(x)=\lim_{\theta \gives 1} f^{\theta}(x).$ We do this with the help of Corollary \ref{C:Cont}, which follows from either Theorem \ref{T:Main} or Proposition \ref{P:Bar}. We write
\[ \text{Sing}_f := \setst{x\in U}{f(x)=0,\, \nabla f(x)=0}\]
for the singular set of a smooth function. 
\begin{corollary}\label{C:Cont}
Let $U$ be an open subset of $\R^{n+1}$. Fix $f \in C^{\infty}(U,\R)$ with finite vanishing order on $U.$ Suppose $F\in C^{\infty}(U\times[0,1])$ with $f^{\theta}(\cdot) := F(\cdot,\theta)$ for $\theta\in[0,1]$ and $f^{1} = f$. Then, for any compact $K \subset U$ with $\hcal^n(K\cap \emph{Sing}_f) = 0$, we have 
\[ \lim_{\theta\to1} \hcal^n(Z_{f^\theta} \cap K) =  \hcal^n(Z_f \cap K).\]
\end{corollary}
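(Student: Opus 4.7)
The plan is to decompose the compact set $K$ into two pieces: a small open neighborhood $V$ of $K\cap \emph{Sing}_f$ on which both $\hcal^n(Z_f\cap V)$ and $\hcal^n(Z_{f^\theta}\cap V)$ are uniformly small as $\theta\to 1$, and a compact complement $K\setminus V$ on which $\nabla f$ is nonzero on $Z_f$, so that the implicit function theorem produces $C^\infty$ convergence of $Z_{f^\theta}$ to $Z_f$ as $\theta\to 1$.

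Fix $\eps>0$. Since $\hcal^n(K\cap \emph{Sing}_f)=0$ and $K\cap \emph{Sing}_f$ is compact, cover it by finitely many balls $B_{r_i}(y_i)$, $i=1,\dots,N$, with $\sum_i r_i^n<\eps$ and each $r_i$ small enough that Theorem \ref{T:Main} applies at $y_i$ to the family $\{f^\theta\}_{\theta\in[0,1]}$. This yields a neighborhood $U_{\eps}\subset[0,1]$ of $\theta=1$ and a constant $C=C(n,\gamma)$ with
\[ \hcal^n(Z_{f^\theta}\cap B_{r_i}(y_i))\leq C r_i^n\quad\text{for all } i=1,\dots,N \text{ and } \theta\in U_\eps,\]
obtained by integrating the uniform $W^{1,1}$ bound on the $\gamma$ graphs covering $Z_{f^\theta}\cap B_{r_i}(y_i)$. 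Setting $V=\bigcup_i B_{r_i}(y_i)$ gives $\hcal^n(Z_{f^\theta}\cap V)+\hcal^n(Z_f\cap V)\leq 2C\eps$ for $\theta\in U_\eps$. On the closed complement $K':=K\setminus V$, the set $Z_f\cap K'$ is compact with $\nabla f\neq 0$ everywhere; for each $y\in Z_f\cap K'$, after rotating coordinates, the $C^\infty$ implicit function theorem applied to $F(x,\theta)$ produces a coordinate box $W_y$ and $\delta_y>0$ such that for $|\theta-1|<\delta_y$ both $Z_f\cap W_y$ and $Z_{f^\theta}\cap W_y$ are smooth graphs over a common $n$-dimensional domain, converging in $C^\infty$ as $\theta\to 1$. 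Extracting a finite subcover $W_1,\dots,W_m$ of $Z_f\cap K'$ and using that $f$, hence $f^\theta$ for $\theta$ near $1$, is bounded away from zero on the compact set $K'\setminus\bigcup_j W_j$, we obtain $Z_{f^\theta}\cap K'\subset\bigcup_j W_j$ for all $\theta$ sufficiently close to $1$.

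Disjointifying via a Borel partition $A_j\subset W_j$, the area formula and $C^\infty$ convergence of the implicit graphs give $\hcal^n(Z_{f^\theta}\cap A_j)\to\hcal^n(Z_f\cap A_j)$ for each $j$, hence $\hcal^n(Z_{f^\theta}\cap K')\to\hcal^n(Z_f\cap K')$ as $\theta\to 1$. Combining with the estimate on $V$,
\[ \limsup_{\theta\to 1}\bigl|\hcal^n(Z_{f^\theta}\cap K)-\hcal^n(Z_f\cap K)\bigr|\leq 2C\eps,\]
and letting $\eps\to 0$ completes the proof. The main obstacle is precisely the uniform-in-$\theta$ Hausdorff bound on $Z_{f^\theta}\cap V$ in the first step: one cannot extract it from $\hcal^n(K\cap\emph{Sing}_f)=0$ alone, since for $\theta$ near but not equal to $1$ the nodal set $Z_{f^\theta}$ can wander away from $\emph{Sing}_f$, and it is exactly the uniform $W^{1,p}$ graph decomposition of Theorem \ref{T:Main} (equivalently, the uniform version of Proposition \ref{P:Bar}) that rules this out.
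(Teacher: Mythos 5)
Your overall decomposition --- a neighbourhood $V$ of $K\cap\mathrm{Sing}_f$ on which $Z_{f^\theta}$ is controlled uniformly in $\theta$, and the compact complement $K'=K\setminus V$ on which the implicit function theorem gives smooth convergence --- is exactly the structure of the paper's proof, and your treatment of $K'$ is fine. The gap is in the small-ball estimate. You assert that Theorem \ref{T:Main} yields $\hcal^n\bigl(Z_{f^\theta}\cap B_{r_i}(y_i)\bigr)\leq C r_i^n$ with a universal constant, ``by integrating the uniform $W^{1,1}$ bound on the $\gamma$ graphs.'' It does not. Theorem \ref{T:Main} provides only a uniform $W^{1,p}$ bound (some $p>1$) for the graph parametrizations over a fixed ball $B_{\bar r}(x_0)$; restricting to a small sub-disc $D_i:=B_{r_i}(y_i)\cap P$ and using H\"older,
\[
\hcal^n\bigl(\Gamma_{f^{\theta}_i}\cap B_{r_i}\bigr)\leq\int_{D_i}\sqrt{1+|\nabla f^{\theta}_i|^2}\;\lesssim\;r_i^n+\norm{\nabla f^{\theta}_i}_{L^p}\,r_i^{\,n(1-1/p)},
\]
and since $p>1$ the dominant term for small $r_i$ has exponent $n(1-1/p)<n$. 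The hypothesis $\hcal^n(K\cap\mathrm{Sing}_f)=0$ lets you choose the cover with $\sum_i r_i^n$ arbitrarily small, but this gives no control at all over $\sum_i r_i^{\,n(1-1/p)}$ --- indeed $\hcal^{n(1-1/p)}(K\cap\mathrm{Sing}_f)$ may be infinite --- so your covering argument does not close. Your parenthetical ``(equivalently, the uniform version of Proposition \ref{P:Bar})'' is precisely the misstep: the two are not equivalent, and the paper explicitly remarks after Proposition \ref{T:NonConcentration} that Theorem \ref{T:Main} yields only a weaker, $\delta$-power non-concentration estimate.

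The fix is to actually invoke the uniform-in-$\theta$ version of Proposition \ref{P:Bar}, which gives the sharp bound $\hcal^n\bigl(Z_{f^\theta}\cap B_r(y)\bigr)\leq(n+1)2^n\gamma\,r^n$ for $r\leq\bar r$ uniformly in $\theta$ near $1$; your covering argument then goes through and is a legitimate alternative to the paper's. The paper instead sets $E=\mathrm{Sing}_f$, bounds $\hcal^n(Z_{f^\theta}\cap E_r\cap K)\leq Cr^{-1}\hcal^{n+1}(E_r)$ via Proposition \ref{T:NonConcentration} (itself a consequence of Proposition \ref{P:Bar}), and then uses the closed $n$-rectifiability of $E$ together with the Minkowski-content formula to conclude $r^{-1}\hcal^{n+1}(E_r)\to\hcal^n(E\cap K)=0$. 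A pragmatic difference between the two routes: the paper's Minkowski-content argument survives even with the weaker non-concentration that Theorem \ref{T:Main} does deliver (any positive power of $r^{-1}\hcal^{n+1}(E_r)$ still tends to $0$), whereas your ball-covering route genuinely requires the sharp $r^n$ scaling of B\"ar's estimate.
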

Corollary \ref{C:Cont}, which we will prove in \S \ref{S:nonconc} follows from the implicit function theorem if $\text{Sing}_f=\emptyset$ but otherwise is non-trivial. In general, $\hcal^n(Z_f)$ is neither lower nor upper semi-continuous as a function of $f$. 

Corollary \ref{C:Cont} applies to the function $u(x,t)$ satisfying \eqref{E:HE}. Indeed, note that by \cite[Thm 1.7]{hardtsimonnodal}, since $\psi$ is an eigenfunction, we have $\hcal^{n-1}(\text{Sing}_\psi) < \infty$. Hence,
\begin{equation}
\lim_{t\gives \infty}\hcal^n(Z_{u(\cdot, t)})=\hcal^n(Z_\psi)\label{E:HeatLimit}.
\end{equation}

\subsection*{Acknowledgements} The authors would like to thank Larry Guth for a series of discussions that led them to these problems and Steve Zelditch for helpful remarks about an earlier version of this article.

\section{$W^{1,p}$ Regularity for Nodal Sets} \label{S:W1p}
 In this section, we prove Theorem \ref{T:Main}. We begin by recalling some results and outlining the proof in \S \ref{S:Outline}. We give the full argument in \S \ref{S:ProofMain}.

\subsection{Outline of the Proof of Theorem \ref{T:Main} and Background}\label{S:Outline}
Our proof of Theorem \ref{T:Main} has three steps. The first is to apply the Malgrange Preparation Theorem (\cite{malgrangepreparation}; or see \cite{nirenbergproofmalgrange} or \cite[Chapter IV \textsection 2]{golubitskyguilleminstable} for later proofs). 

\begin{Thm}[Malgrange; \cite{malgrangepreparation}] \label{T:Malgrange} Let $U$ be an open subset in $\R^{n+1}$ and suppose that $f \in C^{\infty}(U)$ satisfies
\be
\frac{\partial^j}{\partial x_{n+1}^j}f(0) = 0\quad \forall j \leq k-1\qquad \text{and}\qquad \frac{\partial^k}{\partial x_{n+1}^k}f(0) \neq 0.
\ee
Then there exists an open neighborhood $\tilde{U}$ of $0$, a non-vanishing smooth function $c \in C^{\infty}(\tilde{U})$ and smooth functions $a_j \in C^{\infty}(\{x_{n+1} = 0\} \cap \tilde{U})$ for $j=0,\dots,k-1$, such that (writing $\bar{x} = (x_1,\dots,x_n,0)$) 
 we have
\be
f(x) = c(x)\lr{x_{n+1}^k + a_{k-1}(\bar{x})x_{n+1}^{k-1} + \cdots + a_0(\bar{x})}
\ee
in $\tilde{U}$.
\end{Thm}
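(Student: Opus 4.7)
The plan is to reduce Malgrange's preparation theorem to the \emph{Malgrange Division Theorem}, which asserts that for every $g \in C^\infty(U)$ there exist a neighborhood $\tilde U$ of $0$, a smooth function $q \in C^\infty(\tilde U)$, and smooth functions $r_0,\dots,r_{k-1} \in C^\infty(\{x_{n+1}=0\}\cap \tilde U)$ such that
\[
g(x) \;=\; q(x)\,f(x) \;+\; \sum_{j=0}^{k-1} r_j(\bar x)\, x_{n+1}^j.
\]
Granting this, apply the division theorem to the particular choice $g(x) = x_{n+1}^k$ and differentiate the resulting identity $k$ times in $x_{n+1}$, evaluating at $x=0$. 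Since the remainder has degree at most $k-1$ in $x_{n+1}$, its $k$-th derivative in $x_{n+1}$ vanishes identically, and we obtain $k! = q(0)\cdot \partial^k_{x_{n+1}} f(0) \neq 0$. Hence $q(0)\neq 0$, so after shrinking $\tilde U$ the reciprocal $c := 1/q$ is smooth and non-vanishing. Writing $a_j := -r_j$ then yields $f = c\,\lr{x_{n+1}^k + a_{k-1}\, x_{n+1}^{k-1} + \cdots + a_0}$ in $\tilde U$, as required.

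For the division theorem itself, I would follow Nirenberg's complex-analytic approach. First consider the holomorphic case, i.e.\ suppose $f$ and $g$ extend to be holomorphic in $x_{n+1}$. The hypothesis $\partial^k_{x_{n+1}} f(0)\neq 0$ combined with Rouch\'e's theorem supplies a radius $\rho>0$ such that for all small $|\bar x|$, the map $z\mapsto f(\bar x, z)$ has exactly $k$ zeros (with multiplicity) inside the disk $|z|<\rho$ and none on the boundary circle. Let $P(\bar x, z) = z^k + A_{k-1}(\bar x)z^{k-1} + \cdots + A_0(\bar x)$ be the monic polynomial whose roots are these zeros; its coefficients are elementary symmetric functions of the roots, and the Newton identities combined with the residue formula
\[
\sum_i z_i(\bar x)^m \;=\; \frac{1}{2\pi i}\oint_{|z|=\rho} z^m\,\frac{\partial_z f(\bar x, z)}{f(\bar x, z)}\,dz
\]
show that the $A_j(\bar x)$ depend holomorphically on $\bar x$. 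Consequently $u := f/P$ is holomorphic and non-vanishing near the origin. Division of $g$ by $P$ is then carried out via the Cauchy formula and partial fractions, producing an analytic quotient and an analytic polynomial remainder of degree $<k$; multiplication by $1/u$ converts this into division by $f$.

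To pass to the smooth setting, replace holomorphic continuation in $x_{n+1}$ by an \emph{almost-holomorphic extension}: via Borel's theorem, build $\tilde f(\bar x, x_{n+1}+iy)$ and $\tilde g(\bar x, x_{n+1}+iy)$ in $C^\infty$ agreeing with $f$ and $g$ on $\{y=0\}$ and satisfying $\partial_{\bar z}\tilde f,\partial_{\bar z}\tilde g = O(|y|^N)$ for every $N$. Inserting $\tilde f$ and $\tilde g$ into the residue and Cauchy-integral formulas from the analytic case produces smooth candidates for $A_j(\bar x)$, $q(\bar x, x_{n+1})$, and $r_j(\bar x)$; the identity $g = qf + r$ now holds only up to a $\bar\partial$-error, which inherits from $\partial_{\bar z}\tilde f$ the property of being $O(|y|^\infty)$. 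Such an error, being flat on $\{y=0\}$, can by a version of Taylor's theorem be written as a smooth multiple of $f$ and absorbed into $q$.

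The main obstacle will be making this last step rigorous. The contour integrands include factors $1/\tilde f(\bar x, z)$ whose singularities must be controlled as $\bar x$ varies and the zeros $z_i(\bar x)$ of $f(\bar x,\cdot)$ collide or separate; while the elementary symmetric functions of the $z_i(\bar x)$ remain smooth, the individual roots need not be, so all estimates must be performed in terms of the symmetric data. One must further verify that the $\bar\partial$-correction produced by the almost-holomorphic extensions can be absorbed into $q$ uniformly in $\bar x$ on a full neighborhood of $0$. The uniform estimates on almost-holomorphic extensions that make this possible — in the spirit of H\"ormander, Nirenberg, and Mather — are the technical heart of the proof.
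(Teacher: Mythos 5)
The paper does not prove Theorem \ref{T:Malgrange}; it is quoted as a classical result with references to Malgrange, Nirenberg, and Golubitsky--Guillemin, so there is no in-paper argument to compare against. Judged on its own terms, your first step --- deducing preparation from the Malgrange division theorem by dividing $x_{n+1}^k$ by $f$, differentiating $k$ times at $0$ to get $k!=q(0)\,\partial^k_{x_{n+1}}f(0)$, and setting $c=1/q$, $a_j=-r_j$ --- is exactly the standard reduction and is correct. The purely holomorphic (Weierstrass) case is also fine.

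The genuine gap is in the passage to the smooth category. You propose to build the Weierstrass polynomial $P(\bar x,z)$ by feeding an almost-holomorphic extension $\tilde f$ into the argument-principle integral $\frac{1}{2\pi i}\oint z^m\,\partial_z\tilde f/\tilde f\,dz$. But that formula computes power sums of roots only for holomorphic integrands; for an almost-holomorphic $\tilde f$ the Stokes correction involves $\partial_{\bar z}\tilde f/\tilde f^{\,2}$ integrated over a region containing the zero set of $\tilde f$, and off the real axis $\tilde f$ need not have a discrete, or even well-defined, zero set --- so neither the count of zeros nor the holomorphy (or smoothness) of the would-be coefficients $A_j(\bar x)$ follows. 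Controlling $1/\tilde f$ near its zeros is essentially the problem you are trying to solve. Likewise, the final step ``a function flat on $\{y=0\}$ can be written as a smooth multiple of $f$'' is false as stated: flatness of $\partial_{\bar z}\tilde f$ in $\mathrm{Im}\,z$ does not make the resulting error divisible by $f$, whose real zero set can be large. The standard repair (Nirenberg, Mather, H\"ormander) restructures the argument: one first proves division by the \emph{generic} monic polynomial $P_\lambda(t)=t^k+\lambda_{k-1}t^{k-1}+\cdots+\lambda_0$, treating $\lambda$ as free parameters. There the divisor is genuinely entire in $t$, only the dividend receives an almost-holomorphic extension, the quotient and remainder are given by explicit Cauchy--Pompeiu formulas, and the area-integral error is controlled by an elementary lower bound on $|P_\lambda(\zeta)|$ in terms of $|\mathrm{Im}\,\zeta|$ (no absorption step is needed). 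One then applies this polynomial division to $f$ itself and solves $r_j(\bar x,\lambda)=0$ for $\lambda=a(\bar x)$ via the implicit function theorem to produce the coefficients $a_j$, after which general division by $f$, and hence preparation, follows. Without this detour through $P_\lambda$, your outline does not close.
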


This theorem (which is also used in the proof of Proposition \ref{P:Bar}) will allow us to deduce that close to a point at which a smooth function has finite vanishing order, the nodal set is described by the real roots of a smooth family of polynomials. The second step in our proof comes from the work of De Lellis--Grisanti--Tilli \cite{delellisgrisantitilliregular} about continuous selections of $Q-$valued functions: 

\begin{proposition}[Theorem 1.2 of \cite{delellisgrisantitilliregular}]\label{P:DLT} Let $f:[a,b]\gives \mathcal Q_q(\R^n)$ be a $C^{k,\alpha}$ $Q$-valued function. Then there exist functions $g_i:[a,b]\gives \R^n$ such that $g_i\in C^{k,\alpha}([a,b])$ and the $Q-$tuple $\set{f(x)}$ coincides with $\set{g_i(x)}_{i=1}^Q$ for every $x.$
\end{proposition}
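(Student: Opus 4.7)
The plan is to construct the branches $g_1, \dots, g_Q$ by matching polynomial approximants furnished by the $C^{k,\alpha}$ hypothesis, exploiting the fact that on the one-dimensional domain $[a,b]$ there is no monodromy obstruction to lifting a $Q$-valued map to an ordered $Q$-tuple. The first step is to extract, at each $x_0 \in [a,b]$, a $Q$-tuple of polynomials $P_1^{x_0}, \dots, P_Q^{x_0}$ of degree at most $k$ whose unordered $Q$-tuple approximates $f(x)$ to order $k+\alpha$ near $x_0$ (this is essentially the meaning of the $C^{k,\alpha}$ condition on a $Q$-valued map). These polynomials play the role of a ``Taylor jet'' of $f$ and will serve as the skeleton for the global selection.

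On the open set $U \subset [a,b]$ where the values $P_1^{x}(x), \dots, P_Q^{x}(x)$ are all distinct, nearest-value matching uniquely determines local branches, and each such branch inherits the $C^{k,\alpha}$ bound from its polynomial approximant. Gluing these across each connected component of $U$ is automatic because the matching is locally constant in the index. The main task is therefore to extend the selection across the closed \emph{branching set} $B = [a,b] \setminus U$, where two or more of the $P_i^{x_0}(x_0)$ coincide and several continuations are a priori possible.

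At an isolated branching point $x_0 \in B$, I would pair left and right branches by comparing each one-sided limit to the local polynomials $P_i^{x_0}$: the optimal matching used to define the metric on $\mathcal{Q}_q(\R^n)$ selects, for each $i$, a permutation $\tau$ such that $g_i(x) = P^{x_0}_{\tau(i)}(x) + O(|x-x_0|^{k+\alpha})$ on a two-sided neighborhood of $x_0$. This forces a specific gluing and yields the $C^{k,\alpha}$ bound at $x_0$. Consistency of the labeling between successive branching points follows from the continuity of the polynomial approximants (in the quotient sense) together with the one-dimensional topology of $[a,b]$.

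The hard part will be the case where $B$ accumulates, since at an accumulation point one cannot apply the isolated-point argument directly. My plan is to first build a continuous selection globally by a nearest-neighbor matching on a sufficiently fine partition of $[a,b]$ (using uniform continuity and compactness), and then to verify the $C^{k,\alpha}$ bound at any point $x_0$ directly from the $C^{k,\alpha}$ bound on $f$, using that the chosen labeling realizes the optimal matching in the metric on $\mathcal{Q}_q(\R^n)$. The subtlety is to rule out ``wrong'' continuous selections such as $g_1(t) = |t|,\ g_2(t) = -|t|$ arising from the smooth $Q$-valued map $f(t) = \set{t,-t}$; this is precisely what the matching against the polynomial approximants is designed to prevent, and making this precise at accumulation points of $B$ is the main technical obstacle I expect.
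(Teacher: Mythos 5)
This proposition is not proved in the paper at all: it is quoted as Theorem 1.2 of \cite{delellisgrisantitilliregular} and used as a black box, so there is no internal proof to compare yours against; your attempt has to be judged as a proof of the cited theorem itself. Your outline starts correctly (the polynomial-jet formulation of the $C^{k,\alpha}$ condition for $Q$-valued maps, nearest-value matching off the branching set, absence of monodromy on an interval), and the treatment of an isolated branching point is sound: two \emph{distinct} degree-$\leq k$ polynomial branches agreeing at $x_0$ separate at a rate $|x-x_0|^m$ with $m\leq k$, which dominates the $O(|x-x_0|^{k+\alpha})$ error, so the one-sided branches are matched in a forced way except between branches with identical jets, where any pairing is harmless.

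The genuine gap is exactly where you flag it, and it is not a technicality but the crux of the theorem: accumulation points of the branching set $B$ (e.g.\ $B$ a Cantor set, or branching points $x_j\to x_0$ at which different pairs of branches collide). There, (i) no ``sufficiently fine partition'' resolves the branching, since no finite mesh separates the collisions; (ii) the optimal matching between $f(x)$ and $f(y)$ for $x,y$ in different components of $[a,b]\setminus B$ near $x_0$ need not be realized by any single consistent labeling, so ``the chosen labeling realizes the optimal matching'' is the assertion to be proved, not a tool one may invoke; and (iii) even granted a continuous selection, its $C^{k,\alpha}$ regularity does not follow ``directly'' from that of $f$ --- your own example $f(t)=\{t,-t\}$ with selection $|t|,-|t|$ shows continuous selections of smooth $Q$-valued maps need not even be $C^1$, and at an accumulation point of $B$ the jets $P_i^{x_0}$ may coincide to various intermediate orders, so the matching is neither forced by the jets nor irrelevant. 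A standard device that actually closes this (and is, in substance, how arguments of this type --- including the selection theorem of \cite{delellisgrisantitilliregular} and the related regularity theory for roots of polynomials --- proceed) is an induction on $Q$: wherever $\mathrm{supp}\, f(x)$ separates into two clusters, split $f$ locally into multiple-valued functions with fewer values and apply the inductive hypothesis; on the residual closed set where no splitting occurs, subtract the single-valued $C^{k,\alpha}$ barycenter and show the remainder vanishes there to order $k+\alpha$, which is what makes the gluing across that set regular. Without some such mechanism, your final step does not close.
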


\begin{proposition}[Theorem 5.1 of \cite{delellisgrisantitilliregular}]\label{P:DLT2} Let $A\subseteq \R^m.$ If $f:A\gives \mathcal Q_Q(\R)$ is continuous, then there exist continuous functions $g_i:A\gives \R$ for $1\leq i \leq Q$ such that $f(x)=\sum_{i=1}^Q [[g_i(x)]]$
\end{proposition}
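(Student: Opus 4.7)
The plan is to exploit the total ordering of the target $\R$ to define a canonical selection. For each $x\in A$ the multiset $f(x)\in \mathcal Q_Q(\R)$ admits a unique representation
\[ f(x) \;=\; \sum_{i=1}^Q [[a_i(x)]] \qquad \text{with } a_1(x)\leq a_2(x)\leq \dots \leq a_Q(x),\]
and I would simply set $g_i(x):=a_i(x)$, so that the required identity $f(x)=\sum_{i=1}^Q[[g_i(x)]]$ holds by construction.

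The only thing that remains is to verify the continuity of each $g_i$. Equip $\mathcal Q_Q(\R)$ with the standard metric
\[\mathcal G(S,T) \;:=\; \min_{\sigma}\left(\sum_{i=1}^Q(s_i-t_{\sigma(i)})^2\right)^{1/2},\]
where $S=\sum[[s_i]]$ and $T=\sum[[t_i]]$ are any enumerations and $\sigma$ ranges over permutations of $\{1,\dots,Q\}$. The key fact is the rearrangement inequality: if both sequences $(s_i)$ and $(t_i)$ are already sorted in nondecreasing order, then the minimum is attained by the identity permutation. This is the standard swap argument, since exchanging a pair $\sigma(i),\sigma(j)$ with $i<j$ and $\sigma(i)>\sigma(j)$ alters the squared-distance sum by $-2(s_j-s_i)(t_{\sigma(i)}-t_{\sigma(j)})\leq 0$, so iteratively removing inversions can only decrease the total.

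Applying this with $s_i=g_i(x)$ and $t_i=g_i(y)$ gives, for each index $i$,
\[ |g_i(x)-g_i(y)|\;\leq\; \left(\sum_{j=1}^Q(g_j(x)-g_j(y))^2\right)^{1/2} \;=\; \mathcal G(f(x),f(y)),\]
so each $g_i$ is $1$-Lipschitz with respect to $\mathcal G$, and continuity of $f$ transfers directly to continuity of the $g_i$. The main point to emphasize is that there is no genuine obstacle in this $\R$-valued setting: the total ordering of $\R$ yields a globally consistent canonical sort. By contrast, in the vector-valued case $\mathcal Q_Q(\R^n)$ with $n\geq 2$ no such canonical selection exists, which is precisely why the stronger Proposition \ref{P:DLT} requires the more delicate analysis carried out in \cite{delellisgrisantitilliregular}.
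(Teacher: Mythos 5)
Your proof is correct and self-contained. The paper states Proposition \ref{P:DLT2} as a citation to Theorem 5.1 of \cite{delellisgrisantitilliregular} and does not reproduce a proof, so there is nothing internal to compare against; your argument fills that gap cleanly.

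A few remarks on the argument itself. The sorted selection $g_i(x) := a_i(x)$ is the canonical choice for $\R$-valued $Q$-points, and the crux is exactly the rearrangement fact you prove by the swap argument: for sorted $(s_i)$ and $(t_i)$, the identity permutation minimizes $\sum_i (s_i - t_{\sigma(i)})^2$. The computation is correct; one can also see it by noting that minimizing $\sum_i (s_i - t_{\sigma(i)})^2$ is equivalent to maximizing $\sum_i s_i t_{\sigma(i)}$, which is the classical rearrangement inequality. With that, the chain
\[
|g_i(x)-g_i(y)| \leq \Bigl(\sum_{j=1}^Q (g_j(x)-g_j(y))^2\Bigr)^{1/2} = \mathcal G(f(x),f(y))
\]
gives $1$-Lipschitz dependence of each $g_i$ on $f(x)$ in the $\mathcal G$-metric, hence continuity of $g_i$ from continuity of $f$. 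One small point worth spelling out if you want to be fully rigorous: when you ``iteratively remove inversions'' you should swap an \emph{adjacent} inversion $\sigma(i) > \sigma(i+1)$ at each step, so the inversion count strictly decreases and the process terminates at the identity. Your closing observation is also the right one to make: this entire argument hinges on the total order of $\R$, which is why it produces a genuinely canonical global selection with no regularity loss, whereas for targets $\R^n$ with $n \geq 2$ no such canonical labelling exists and the continuous-selection problem (and a fortiori the $C^{k,\alpha}$ selection of Proposition \ref{P:DLT}, even restricted to an interval) requires the more involved analysis of \cite{delellisgrisantitilliregular}.
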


Given such a continuous selection, the third step, which is the key technical ingredient to our argument, is the recent work \cite{parusinskirainerregularity} of Parusi\'{n}ski and Rainer on the regularity of a continuous parametrization of the roots for such a family.

\begin{Thm}[Theorem 3.5 of \cite{parusinskirainerregularity}] \label{T:ParusinskiRainer}
Fix $k \in \mathbb{N}$. There exists $p = p(k) > 1$ such that the following is true.  Let $I\subseteq \R$ be a compact interval and let $\{ P_{a_\nu}\}_{\nu \in \mathcal N}$, for some indexing set $\mathcal{N}$, denote a family of monic polynomials 
\be
P_{a_{\nu}(t)}(X) := X^k + a_{\nu,k-1}(t)X^{k-1} + \dots + a_{\nu,1}(t)X + a_{\nu,0}(t).
\ee
with $a_{\nu,j} \in C^{\infty}(I;\C)$ for all $\nu \in \mathcal{N}$, $j = 0,\dots,k-1$. Let 
\be 
\Xi := \{\lambda_{\nu} \in C^0(I;\C) : P_{a_{\nu}}(\lambda_{\nu}) = 0\ \text{on}\ I\ \text{for some}\ \nu \in \mathcal{N} \}.
\ee
Then, the distributional derivative of each $\leb_j$ is a measurable function on $I$ with $\leb_j'\in L^q(I)$ for every $q\in [1,p).$ and if $\{a_{\nu,j}\}_{j=0,\dots,k-1;\ \nu \in \mathcal{N}}$ is bounded in $C^L(I;\C)$ for some sufficiently large $L$, then $\Xi$ is bounded in $W^{1,q}(I;\mathbb{C})$ for every $q \in [1,p)$.
\end{Thm}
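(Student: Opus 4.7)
The plan is an induction on the degree $k$, with the base case $k=1$ trivial since the unique root is $-a_{\nu,0}$. For the inductive step I would first reduce to the \emph{Tschirnhausen form} $a_{\nu,k-1}\equiv 0$ by the shift $X\mapsto X-a_{\nu,k-1}/k$; this is a smooth, measure-preserving change of coordinates that preserves the $W^{1,q}$ claim since $a_{\nu,k-1}$ is smooth. After the shift, $\sum_j \lambda_{\nu,j}\equiv 0$ and the standard Cauchy-type bound yields a uniform $L^\infty$ bound on the roots in terms of $\max_j \|a_{\nu,j}\|_\infty^{1/(k-j)}$.

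Next I would analyze the roots away from and near the discriminant locus $D:=\{t\in I : \Delta(t)=0\}$. Away from $D$ the roots are pairwise distinct, so the implicit function theorem gives
\[
\lambda_\nu'(t)=-\frac{\partial_t P_{a_\nu}(t,\lambda_\nu(t))}{\partial_X P_{a_\nu}(t,\lambda_\nu(t))},
\]
and $\lambda_\nu$ is smooth there. The entire content of the theorem is therefore how to control the integrable singularity as $t$ approaches a point of $D$. My first attempt would be a factorization reduction: if on a neighborhood of $t_0\in D$ the polynomial splits as a smooth product $P_{a_\nu}=Q\cdot R$ with $\deg Q,\deg R<k$, the induction hypothesis applied to $Q$ and $R$ yields the bound, with a loss absorbed into the $C^L$ norms by standard elliptic-type estimates (Vieta's formulas relating coefficients to elementary symmetric functions of the roots).

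The essential case is when $P_{a_\nu}$ does not admit such a smooth factorization near $t_0$. Here I would run a sequence of admissible blow-ups (weighted in $t$ and $X$) chosen to desingularize the discriminant, in the spirit of resolution of singularities for the family of polynomials. After finitely many blow-ups the strict transform factors smoothly and the induction applies on each factor; pulling the estimate back through the blow-up introduces a factor of the form $(t-t_0)^{1/m-1}$ for some $m\le k$, which lies in $L^q$ precisely for $q<m/(m-1)$. Taking $p(k):=k/(k-1)$ (or the minimum over all blow-up data, which still exceeds $1$) produces an exponent depending only on $k$.

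The main obstacle will be making this blow-up procedure \emph{uniform} over the family $\{a_\nu\}_{\nu\in\mathcal N}$: a priori the number of blow-ups, and hence the constants in the final estimate, could depend on $\nu$. To handle this I would quantify the desingularization, showing that if the coefficients are bounded in $C^L$ for $L=L(k)$ sufficiently large (for example, large enough to control the Newton polygon data of $P_{a_\nu}$ at every singular point), then the combinatorial structure of the blow-ups is determined by $k$ alone. This uniformity is what ultimately yields the boundedness of $\Xi$ in $W^{1,q}(I;\mathbb C)$ rather than just an $\nu$-by-$\nu$ statement.
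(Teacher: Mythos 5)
First, note that the paper does not itself prove this statement; it is quoted from Parusi\'nski--Rainer \cite{parusinskirainerregularity} and used as a black box in the proof of Theorem \ref{T:Main}, so the comparison must be against the actual argument in \cite{parusinskirainerregularity}.

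Your Tschirnhausen reduction, the implicit-function formula for $\lambda_\nu'$ off the discriminant locus, the inductive splitting when $P_{a_\nu}$ factors near $t_0$, and the target exponent $p(k)=k/(k-1)$ all match genuine ingredients of the real proof. The gap is the treatment of the nonsplitting case by a finite sequence of weighted blow-ups that desingularize the discriminant, with Newton-polygon data controlled by the $C^L$ bounds. That strategy is available when the coefficients are real- or complex-analytic (it is essentially the Kurdyka--Paunescu approach for hyperbolic polynomials, which Parusi\'nski--Rainer also used in earlier work), but here the $a_{\nu,j}$ are only $C^\infty$. The discriminant $\Delta(t)$ is then merely a smooth function of $t$: it can be flat at a point of $D$, its zero set $D$ can have accumulation points or be a Cantor set, and the Taylor series of $\Delta$ at a point of $D$ can vanish identically, so there is no Newton polygon and no finite resolution to run. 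This is exactly the obstruction that separates the smooth case from the analytic one, and it cannot be absorbed by taking $L$ large. Parusi\'nski--Rainer avoid resolution entirely: after the Tschirnhausen reduction they control the size of all roots by $\max_j\|a_{\nu,j}\|_\infty^{1/(k-j)}$, split the polynomial into lower-degree smooth factors on intervals where a gap between root clusters persists, and on the residual intervals --- where all roots collapse together --- use the Tschirnhausen normalization to estimate $\lambda_\nu'$ in $L^q$ directly; the uniformity over $\nu$ comes from the $C^L$ bound on the coefficients feeding into these quantitative estimates, not from a uniform resolution datum. So the inductive skeleton of your proposal is right, but the engine that drives the nonsplitting case would have to be replaced.
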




\subsection{Proof of Theorem \ref{T:Main}}\label{S:ProofMain} The following Lemma reduces Theorem \ref{T:Main} to a local statement in which we can apply the regularity of roots result given in Theorem \ref{T:ParusinskiRainer}.
\begin{Lem}[Reduction to polynomials with smoothly varying coefficients]\label{L:Reduction}
Let $K\subseteq U$ be compact. There exist $R,\bar{r}>0$, finitely many points $(x_i,\theta_i)\in K\x \Theta$, as well as coordinate patches $U_i=\set{y_1,\ldots, y_n, t}$ and $V_i$ centered at $x_i,\theta_i$ with the following property. For every $(x,\theta)\in K\x \Theta,$ either $Z_{f^\theta}\cap B(x,\bar{r})=\emptyset$ for every $\theta$ or there exists $i$ so that for every $\rho\in (0,\bar{r})$
\begin{equation}
B_{\rho}(x) \subseteq \mathrm{C}, \label{E:rbarprop}
\end{equation}
where $\mathrm{C} =(-R,R)^{n+1}$ is an open cube centered at the origin in $U_i.$ Moreover, in each coordinate patch $U_i\x V_i,$ there exists $Q\leq \gamma$ and smooth functions $a_q(y,\theta)$ so that
\[Z_{f^\theta}|_{U_i\x V_i}=Z_{P^\theta}\]
with
\begin{equation}
P^\theta(y,t)=t^Q+\sum_{q=0}^{Q-1}t^q a_q(y,\theta).\label{E:Main3}
\end{equation}
\end{Lem}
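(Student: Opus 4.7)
The plan is to apply the Malgrange Preparation Theorem (Theorem \ref{T:Malgrange}) pointwise on $K\times\Theta$ to produce the polynomial representation \eqref{E:Main3} at each point, and then use compactness to extract a finite cover. First, I fix $(x_0,\theta_0)\in K\times\Theta$. If $f^{\theta_0}(x_0)\neq 0$, continuity of $F$ gives a product neighborhood $W_0\times V_0\subseteq U\times\Theta$ on which $F$ is non-vanishing, and so $Z_{f^\theta}\cap W_0=\emptyset$ there. Otherwise $f^{\theta_0}(x_0)=0$; the uniform bound $\gamma$ on vanishing order (inherited from the hypotheses of Theorem \ref{T:Main} after shrinking $K$ and $\Theta$ and invoking upper semi-continuity of vanishing order) furnishes a multi-index $\alpha$ with $|\alpha|=Q\leq\gamma$ and $D^\alpha f^{\theta_0}(x_0)\neq 0$. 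The degree-$Q$ homogeneous part of the Taylor polynomial of $f^{\theta_0}$ at $x_0$ is a nonzero homogeneous polynomial on $\R^{n+1}$, so a generic unit vector $v$ satisfies $\partial_v^Q f^{\theta_0}(x_0)\neq 0$ while all pure lower-order directional derivatives in direction $v$ vanish. After a translation placing $x_0$ at the origin and a rotation placing $v$ at $e_{n+1}$, and writing $(y,t)=(x_1,\ldots,x_n,x_{n+1})$, one has $\partial_t^j f^{\theta_0}(0)=0$ for $0\leq j<Q$ and $\partial_t^Q f^{\theta_0}(0)\neq 0$.

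Second, I apply Theorem \ref{T:Malgrange} to $F(y,t,\theta)$ near $(0,0,\theta_0)$, treating $(y,\theta)$ as the full parameter variable in the role of $\bar{x}$ (after composing with a chart of $\Theta$ around $\theta_0$). This produces an open neighborhood $U_0\times V_0$ of $(0,\theta_0)$, a smooth non-vanishing factor $c(y,t,\theta)$, and smooth coefficients $a_q(y,\theta)$ with
\[ F(y,t,\theta) = c(y,t,\theta)\bigl(t^Q + a_{Q-1}(y,\theta)t^{Q-1}+\cdots+a_0(y,\theta)\bigr)\]
on $U_0\times V_0$. Since $c$ never vanishes, $Z_{f^\theta}\cap U_0 = Z_{P^\theta}\cap U_0$ for every $\theta\in V_0$, with $P^\theta$ of the form \eqref{E:Main3} and $Q\leq\gamma$. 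Ranging over all $(x_0,\theta_0)$, the neighborhoods so constructed form an open cover of the compact set $K\times\Theta$; I extract a finite subcover $\{U_i\times V_i\}_{i=1}^N$ centered at $(x_i,\theta_i)$, and, after shrinking each $U_i$ slightly to an open cube $(-R,R)^{n+1}$ in its local coordinates, the representation \eqref{E:Main3} (or the non-vanishing of $F$) still holds on $U_i\times V_i$.

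To obtain the radius $\bar{r}$, I apply a Lebesgue-number argument to this finite cover: there is $\bar{r}>0$ such that for each $(x,\theta)\in K\times\Theta$, the product $B(x,\bar{r})\times\{\theta\}$ lies inside some $U_i\times V_i$. Since the finitely many coordinate changes distort the Euclidean distance only by a uniformly bounded factor, one can arrange \eqref{E:rbarprop} by shrinking $\bar{r}$ further if needed; on any such patch, either $Z_{f^\theta}\cap B(x,\bar{r})=\emptyset$ or $Z_{f^\theta}=Z_{P^\theta}$ there. The main and essentially only substantive obstacle is the use of Malgrange in parametrised form, with $\theta$ appended to the parameter variable $\bar{x}$ of Theorem \ref{T:Malgrange}. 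This is standard, since the Malgrange Preparation Theorem holds with any finite-dimensional smooth parameter space and one reduces to the stated form via a chart of $\Theta$ around $\theta_0$. A minor additional technicality is the rotation producing a nonzero pure derivative $\partial_t^Q f^{\theta_0}(0)$, which is resolved by the observation that the $Q$-jet of $f^{\theta_0}$ at $x_0$ is a nonzero homogeneous polynomial of degree $Q$ and is therefore nonzero in a generic direction.
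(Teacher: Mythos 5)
Your proof is correct and follows essentially the same strategy as the paper's: after rotating so that a pure $t$-derivative of order $Q\leq\gamma$ is nonzero, apply Malgrange preparation in the $(y,\theta)$-parametrised form to obtain a local polynomial representation with nonvanishing factor $c$, then use compactness of $K\times\Theta$ and the Lebesgue number lemma to extract the uniform radius $\bar{r}$. You spell out a couple of points the paper leaves implicit (the generic-direction rotation and the explicit handling of the non-vanishing case), but the structure of the argument is the same.
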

\begin{proof}
Write $Z_K=\setst{(x,\theta)\in K\x \Theta}{f(x,\theta)=0}.$ For every $(x,\theta)\in Z_K,$ we combine the finite vanishing assumption on $f^\theta$ with Malgrange preparation (Theorem \ref{T:Malgrange}). This yields the existence of $r=r(x,\theta)>0,$ $Q \leq \gamma$, coordinates patches $U=U(x,\theta)=\set{y_1,\ldots, y_n, t,\theta},\,V=V(x,\theta)$ centered at $x,\theta,$ and smooth functions 
\[a_q: (-R,R)^n\x V\gives \R,\qquad  c: (-R,R)^{n+1}\x V\to \R\]
in these coordinates so that $c$ is non-vanishing and
\begin{align} \label{E:Main2}
f(y,t,\theta) = c(y,t,\theta) \cdot P^{\theta}(y,t)
\end{align}
with $P^\theta$ as in \eqref{E:Main3}. This means that the zero set of $f^\theta$ restricted to $\mathrm{C}\x V$ coincides with that of $P^\theta.$ The proof is completed by applying the Lebesgue number lemma to the covering of $\pi(Z_K)$ by the collection of coordinate cubes $\mathrm{C},$ where $\pi:Z_K\to K$ is the natural projection.   
\end{proof}

Lemma \ref{L:Reduction} reduces Theorem \ref{T:Main} to the case when $f^\theta=P^\theta$ and the set $U$ is $\mathrm{C}=(-R,R)^{n+1}$. Let us denote $\Omega := \mathrm{C} \cap \{t = 0\}$. The $Q$ complex roots of a degree $Q$ polynomial depend continuously on the coefficients, which means that there exists a continuous $Q$-valued function $R \in C^0(\W\x \Theta, \mathcal A_Q(\C))$ such that
\begin{equation}
 \label{E:Main6}
R(y,\theta) = \sum_{t : P(y,t,\theta)  = 0} \lbr t \rbr.
\end{equation}
We will write $R^{\theta}(y) = R(y,\theta)$ and define $\Rcal^\theta(y) := \Re\lr{R^\theta(y)}.$ By Proposition \ref{P:DLT2}, there exist continuous single-valued functions $\Rcal_j^\theta(y) : \W \x \Theta \to \{t=0\}^{\perp} \simeq \R$ for $1\leq j \leq Q,$ with the property that for every $y \in \Omega$ we have $\Rcal^{\theta}(y) = \sum_{j=1}^Q \lbr \Rcal_j^{\theta}(y) \rbr$. Hence, 
\be \label{E:Main7}
Z_{f^{\theta}} \subseteq \bigcup_{j=1}^Q \Gamma_{\Rcal_j^{\theta}},
\ee
where $\Gamma_g$ denotes the graph of $g.$ 
We now check that each $R_j^\theta$ belongs to $W^{1,p}(\W)$ for some $p>1.$ 

To see this, fix $i \in \{1,\dots,n\}$ and let $\mathcal L_i$ denote the set of lines parallel to the $y_i$-axis that intersect $\Omega$. Proposition \ref{P:DLT} implies that for any line $L \in \mathcal{L}_i$, there exist continuous functions $R_{j,L}^\theta\in C^0((L \cap \Omega) \times V, \C)$ for $j=1,\ldots, Q$ such that for every $(y,\theta) \in (L \cap \Omega) \times V$ we have
\begin{equation}
 \label{E:Main8}
\sum_{j=1}^Q \lbr R_{j,L}^\theta(y) \rbr  = \sum_{t : P(y,t,\theta) = 0} \lbr z \rbr.
\end{equation}
In order to apply Theorem \ref{T:ParusinskiRainer}, set $\mathcal{N} = V\times \mathcal{L}_i$, $I := [-R,R]$ and define $\tilde{a}_{(\theta,L),j} \in C^{\infty}(I,\C)$ to be the restriction of $a_j(\cdot, \theta)$ to $L$: 
\[\tilde{a}_{(\theta,L),j}(s) = a_j(se_i,\theta),\] 
where $e_i$ is the $i$th standard basis vector. Notice that $\{\tilde{a}_{(\theta,L),j}\}_{(\theta,L) \in \mathcal{N};\ j=0,\dots,Q-1}$ is bounded in $C^k(I,\C)$ for every $k$. Thus by Theorem \ref{T:ParusinskiRainer} and the fact that $i$ was arbitrary, there exists $p > 1$ and a constant $C > 0$ such that
\begin{equation}
\label{E:Main9}
\sup_{\substack{1\leq i \leq n,1 \leq  j\leq Q\\ \theta \in V,\, L\in \mathcal L_i} } \norm{\dell_{x_i}R_{j,L}^\theta}_{L^{p}(L\cap \Omega)} < C.
\end{equation}
The same therefore holds with $R_{j,L}^\theta$ replaced by its real part. Hence,
\be \label{E:Main10}
\norm{\dell_{x_i}\Rcal_j^\theta}_{L^p(L\cap \Omega)}\leq \sum_{j=1}^Q \norm{\dell_{x_i}\Rcal_j^\theta}_{L^p(L\cap \Omega)} = \sum_{j=1}^Q \norm{\dell_{x_i}\Re \lr{ R_{j,L}^\theta}}_{L^p(L\cap \Omega)}
\ee
since for every $y\in L\cap \Omega$ the $Q$-tuple $\lr{\Re\lr{R_{1,L}^\theta(y)},\ldots, R_{Q,L}^\theta(y)}$ is a permutation of $\lr{\Rcal_1^\theta(y),\ldots, R_{Q}^\theta(y)}$. Combining this with \eqref{E:Main9} and Fubini's Theorem, we deduce that
\begin{equation}
\label{E:Main11}
\sup_{\substack{1\leq i \leq n, 1 \leq j\leq Q\\ \theta \in V}}\int_{\Omega} \abs{\dell_{x_i} \Rcal_j^\theta(x)}^p dx<\infty.
\end{equation}
This shows that $\Rcal_j^\theta\in W^{1,p}(\W\x \Theta)$ and completes the proof of Theorem \ref{T:Main}. \qed

\section{Non-concentration of nodal sets and proof of Corollary \ref{C:Cont}}  \label{S:nonconc}
The following result gives an estimate on the extent to which the nodal set of a smooth function can concentrate near a lower dimensional set.
\begin{proposition}\label{T:NonConcentration} Let $U$ be an open subset of $\R^{n+1}$ and let $\Theta$ be a smooth, compact manifold, possibly with boundary. Consider $F\in C^\infty(U\x \Theta),$ and suppose there exists $\gamma > 0$ such that for every $\theta \in \Theta $ and $x \in U$, the vanishing order of $f^\theta:=F\lr{\cdot, \theta}$ at $x$ is at most $\gamma$. Fix compact sets $K\subset U$ and $E\subseteq \R^{n+1}$ with $E$ being $m-$rectifiable for some $m\leq n.$ Write 
\[\emph{Lip}_1\lr{E,K}:=\setst{\iota (E)}{\iota:E\gives K~\text{ is Lipschitz with }~ \norm{\iota}_{Lip}\leq 1}\]
and denote by $A_r$ the $r-$neighborhood of $A\subseteq \R^{n+1}.$ Then there exist $\bar{r}>0$ and $C = C(n) > 0$, so that the following non-concentration estimate holds:
\begin{equation}
\sup_{\substack{E'\in \emph{Lip}_1(E,K)\\ \theta \in \Theta}} \hcal^n\lr{Z_{f^\theta} \, \cap \,E_r'\,\cap\, K }\leq Cr^{-1}\cdot \hcal^{n+1}(E_r) \qquad \forall r\leq \bar{r}.\label{E:NonConcentrationE}
\end{equation}
\end{proposition}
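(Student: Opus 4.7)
The plan is to combine a Vitali-type covering of $E$ with B\"ar's local bound (Proposition \ref{P:Bar}), pushed forward through $\iota$ so that the resulting cover of $E'=\iota(E)$ has centres inside $K$. The reason to perform the cover on $E$ rather than on $E'$ is explained at the end.

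Since the vanishing order of $f^\theta$ is bounded by $\gamma$ uniformly on $K\times\Theta$, a standard compactness argument using the smooth dependence of $F$ on $(x,\theta)$ upgrades Proposition \ref{P:Bar} to a uniform version: there exists $\bar{r}>0$ with $K_{2\bar{r}}\subseteq U$ such that for every $x_0\in K$, every $\theta\in\Theta$, and every $\rho\leq 2\bar{r}$,
\[
\hcal^n\lr{Z_{f^\theta}\cap B_\rho(x_0)}\leq (n+1)2^n\gamma\,\rho^n.
\]

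Now fix $r\leq\bar{r}$ and apply the $5r$-covering lemma to the family $\{B_{r/5}(x):x\in E\}$ to obtain finitely many points $z_1,\ldots,z_N\in E$ such that the balls $B_{r/5}(z_i)$ are pairwise disjoint while $E\subseteq\bigcup_i B_r(z_i)$. Since the disjoint balls lie in $E_{r/5}\subseteq E_r$, a volume comparison gives $N\leq C_n\,r^{-(n+1)}\hcal^{n+1}(E_r)$. For any $\iota\in\text{Lip}_1(E,K)$, the $1$-Lipschitz property of $\iota$ yields $\iota(E\cap B_r(z_i))\subseteq B_r(\iota(z_i))$, so $E'=\iota(E)\subseteq\bigcup_i B_r(\iota(z_i))$ and therefore $E'_r\subseteq\bigcup_i B_{2r}(\iota(z_i))$. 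Each centre $\iota(z_i)$ lies in $K$, so the uniform B\"ar estimate applies on each $B_{2r}(\iota(z_i))$, and summing yields
\[
\hcal^n\lr{Z_{f^\theta}\cap E'_r\cap K}\leq\sum_{i=1}^N\hcal^n\lr{Z_{f^\theta}\cap B_{2r}(\iota(z_i))}\leq N\cdot (n+1)2^n\gamma(2r)^n\leq C(n,\gamma)\,r^{-1}\hcal^{n+1}(E_r),
\]
which is the desired estimate, with the constant depending only on $n$ and $\gamma$.

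The main subtlety is the choice of where to perform the Vitali cover. Carrying it out on $E'$ directly would force a comparison of $\hcal^{n+1}(E'_r)$ with $\hcal^{n+1}(E_r)$ valid uniformly over all $1$-Lipschitz $\iota$, which seems delicate because Minkowski-content asymptotics for rectifiable sets are not in general uniform along Lipschitz deformations. Covering $E$ first and pushing the cover forward through $\iota$ bypasses this issue entirely: the $1$-Lipschitz property directly controls the radii of the images of the covering balls, so the ball count $N$ is already controlled by $\hcal^{n+1}(E_r)$, and no rectifiability of $E$ beyond what is needed to keep $\hcal^{n+1}(E_r)$ finite for small $r$ enters the argument.
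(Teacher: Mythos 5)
Your proof is correct. Its key move --- running the Vitali covering on $E$ itself (producing finitely many disjoint balls $B_{r/5}(z_i)\subseteq E_r$ whose cardinality $N$ is controlled by $\hcal^{n+1}(E_r)/r^{n+1}$) and then transporting that cover through the $1$-Lipschitz map $\iota$ to obtain $E'_r\subseteq\bigcup_i B_{2r}(\iota(z_i))$ with centres in $K$ --- is a different and, I think, more careful route than the one the paper sketches. The paper's one-line proof invokes Proposition~\ref{P:Bar} together with Federer's Minkowski-content asymptotics $\lim_{r\to 0}\hcal^{n+1}(E_r)/r^{n+1-m}=c\,\hcal^m(E)$ for rectifiable $E$, which suggests covering $E'_r$ directly and then comparing $\hcal^{n+1}(E'_r)$ with $\hcal^{n+1}(E_r)$ via the $1$-Lipschitz bound $\hcal^m(E')\le\hcal^m(E)$. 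As you note, that comparison is not uniform over $\iota$ (the radius below which the Minkowski-content limit kicks in depends on $E'=\iota(E)$), so your approach closes what otherwise looks like a gap. It also buys generality: your argument never uses rectifiability of $E$ at all --- only that $E$ is compact so that the disjoint-ball count is finite and $\hcal^{n+1}(E_r)<\infty$; the rectifiability enters only afterwards, in the application (Corollary~\ref{C:Cont}), to make the right-hand side tend to $0$.

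Two small points. First, the uniform version of B\"ar's estimate you use is stated for balls $B_\rho(x_0)$ centred at \emph{arbitrary} $x_0\in K$, whereas Proposition~\ref{P:Bar} applies to balls centred at zeros of $f^\theta$; you need one more re-centring step (if $Z_{f^\theta}\cap B_\rho(x_0)\neq\emptyset$, pick $x_1$ in it and use $B_\rho(x_0)\subseteq B_{2\rho}(x_1)$), which costs an extra factor of $2^n$ and changes the constant to $(n+1)2^{2n}\gamma\rho^n$. Second, the constant you obtain depends on $\gamma$ as well as $n$; this is unavoidable given B\"ar's bound, and the $C=C(n)$ in the paper's statement should really be read as $C=C(n,\gamma)$ with $\gamma$ fixed by the hypotheses. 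Neither affects the validity of your argument.
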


Proposition \ref{T:NonConcentration} follows easily from Proposition \ref{P:Bar} and the fact that for a closed $m-$rectifiable $E\subseteq \R^{n+1}$ we have
\[\lim_{r\gives 0} \frac{\mathcal H^{n+1}\lr{E_r}}{r^{n+1-m}}=\frac{\alpha(n+1-m)}{2^m\alpha(m)} \mathcal H^m(E),\]
where $\alpha(l)$ is the volume of a unit ball in $\R^l$  (\cite[Thm. 3.2.29]{federergmt}). Using Theorem \ref{T:Main}, rather than Proposition \ref{P:Bar}, one can prove a weaker version of \eqref{E:NonConcentrationE} in which the constant $C$ is allowed to depend on $f$ and the expression $(r^{-1}\cdot \hcal^{n+1}(E_r))$ is raised to some power $\delta > 0$. Proposition \ref{T:NonConcentration} will be used in \S \ref{S:PSweepout} to check a non-concentration condition in the definition of a p-sweepout. We use it now to prove the continuity result in Corollary \ref{C:Cont}.



\subsection*{Proof of Corollary \ref{C:Cont}} Let us write $E=\text{Sing}_{f}$, and let $K\subset U$ be a fixed compact set. By the implicit function theorem, for every compact subset $L\subseteq K\backslash E$ and every $\ep>0$ there exists $\eta>0$ so that 
\begin{align} \label{E:IFT1}
\abs{\hcal^n\lr{Z_{f_{\theta}}\cap L}-\hcal^n(Z_{f}\cap L)}\leq \ep,\qquad \forall \theta\geq 1-\eta.
\end{align}
Moreover, by Proposition \ref{P:Bar}, for $r$ sufficiently small, we have the estimate
\begin{align*}
\sup_{\theta\in[1-\eta,1]}\hcal^{n}\left(Z_{f_{\theta}}\cap E_{r}\cap K\right) \leq C r^{-1}\cdot \hcal^{n+1}(E_r).
\end{align*}
Since $E$ is a closed $n-$rectifiable set, its $n$-dimensional Minkowski content is equal to a constant times its $n-$dimensional Hausdorff:
\begin{align*}
\lim_{r\to0} \frac{\hcal^{n+1}(E_r)}{r} = \hcal^n(E)=0.
\end{align*}
In particular, for $r>0$ sufficiently small
\begin{align*}
\sup_{\theta\in[1-\eta,1]}\hcal^{n}\left(Z_{f_{\theta}}\cap E_{r}\cap K\right) \leq \ep.
\end{align*}
Combining this with the estimate in \eqref{E:IFT1} completes the proof of the Corollary.  \qed

\section{Nodal Sets as $p$-Sweepouts}\label{S:PSweepout} 
In this section we will prove Theorem \ref{T:Psweepout} and Corollary \ref{C:Weakhomequiv} together with Theorem \ref{T:UpperBound}. We will need the following simple fact.

\begin{Lem} \label{L:Continflat}
Let $(M^{n+1},g)$ be a smooth Riemannian manifold without boundary. Suppose that $f \in C^0(M,\R)$ has $\mathcal{H}^{n+1}(Z_f) = 0$ and fix $\varphi \in L^{\infty}(M)$. Then the map 
\begin{align*}
\R &\quad \rightarrow \quad  \mathcal Z_n(M,\Z_2) \\
\delta &\quad \mapsto\quad  \dell\abs{\set{f+\delta \phi < 0}}
\end{align*}
is continuous at $\delta = 0$ with respect to the flat topology on $\mathcal Z_n(M,\Z_2).$
\end{Lem}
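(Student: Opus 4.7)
The idea is to bound the flat norm of the difference $\partial|\{f+\delta\varphi<0\}| - \partial|\{f<0\}|$ by the $(n+1)$-dimensional Hausdorff measure of the symmetric difference of the two sublevel sets, and then show that this measure vanishes as $\delta \to 0$ using the hypothesis $\mathcal{H}^{n+1}(Z_f) = 0$.

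\textbf{Step 1: Reduction to an $(n+1)$-measure estimate.} Set $A_\delta := \{f+\delta\varphi < 0\}$ and $A_0 := \{f < 0\}$. Working in the setting of mod-$2$ flat chains, I would note that
\[
\partial|A_\delta|-\partial|A_0|=\partial\bigl(|A_\delta|+|A_0|\bigr)=\partial|A_\delta\triangle A_0|,
\]
since the characteristic chain of the symmetric difference is the mod-$2$ sum of the two characteristic chains. By the definition of the flat norm, taking $R := |A_\delta\triangle A_0|$ and $S := 0$ as a competitor yields
\[
\mathcal{F}\bigl(\partial|A_\delta|-\partial|A_0|\bigr)\ \leq\ \mathbf{M}\bigl(|A_\delta\triangle A_0|\bigr)\ =\ \mathcal{H}^{n+1}(A_\delta\triangle A_0).
\]
Thus it suffices to prove that $\mathcal{H}^{n+1}(A_\delta\triangle A_0)\to 0$ as $\delta\to 0$.

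\textbf{Step 2: Pointwise inclusion into a shrinking neighborhood of $Z_f$.} If $x\in A_\delta\triangle A_0$, then exactly one of $f(x)<0$ and $f(x)+\delta\varphi(x)<0$ holds. In either case, $f(x)$ and $f(x)+\delta\varphi(x)$ have opposite signs (or one is zero), so $|f(x)|\leq |\delta|\,\|\varphi\|_{L^\infty(M)}$. Hence
\[
A_\delta\triangle A_0\ \subseteq\ \bigl\{\,x\in M \,:\, |f(x)|\leq |\delta|\,\|\varphi\|_{L^\infty}\,\bigr\}.
\]

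\textbf{Step 3: Conclude via continuity from above.} The sets $U_\delta:=\{|f|\leq |\delta|\,\|\varphi\|_{L^\infty}\}$ are closed and decrease to $Z_f$ as $\delta\to 0$. Assuming $\mathcal{H}^{n+1}(M)<\infty$ in a neighborhood of $Z_f$ (which is the relevant setting, since otherwise one localizes to a compact set where $|A_\delta|$ has finite mass; note this kind of finiteness is implicit in working with $\mathcal{Z}_n(M,\mathbb{Z}_2)$), continuity from above of the finite Radon measure $\mathcal{H}^{n+1}$ combined with the hypothesis $\mathcal{H}^{n+1}(Z_f)=0$ gives $\mathcal{H}^{n+1}(U_\delta)\to 0$. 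Combined with Steps 1 and 2, this yields $\mathcal{F}\bigl(\partial|A_\delta|-\partial|A_0|\bigr)\to 0$, which is precisely continuity at $\delta=0$ in the flat topology.

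\textbf{Main obstacle.} There is no deep obstacle here; the content of the lemma is simply a careful pairing of the flat-norm bookkeeping (Step 1) with the measure-theoretic squeezing of $A_\delta\triangle A_0$ into $Z_f$ (Steps 2--3). The only subtlety worth checking is that the chain identity $\partial|A_\delta|-\partial|A_0|=\partial|A_\delta\triangle A_0|$ holds in $\mathcal{Z}_n(M,\mathbb{Z}_2)$, which is immediate from $\mathbb{Z}_2$ arithmetic once one knows that each $A_\delta$ is a Borel set of locally finite $(n+1)$-measure — which it is, since $|A_\delta|\leq \mathcal{H}^{n+1}$ of any relevant compact set.
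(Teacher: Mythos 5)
Your proof is correct and follows essentially the same approach as the paper's: bound the flat distance by $\mathcal{H}^{n+1}$ of a set containing the discrepancy between the two sublevel sets, include that set in $\{|f|\leq|\delta|\|\varphi\|_{L^\infty}\}$, and show this measure tends to zero as $\delta\to 0$ using $\mathcal{H}^{n+1}(Z_f)=0$. You are marginally more careful in bounding by the full symmetric difference $A_\delta\triangle A_0$ (the paper writes only the one-sided set $\{f<0<f+\delta\varphi\}$), and you close the measure estimate via continuity from above of the finite measure directly, where the paper instead passes through metric $\alpha$-neighborhoods of $Z_f$ using compactness, the Lebesgue number lemma and uniform continuity of $f$ --- both routes implicitly use compactness, and both are fine.
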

\begin{proof}
Fix $\ep > 0$ and write $\mathcal F(S,T)$ for the distance between $S,T\in \mathcal Z_n(M,\Z_2)$ in the flat metric (see \cite[p. 367]{federergmt}). The definition of the flat metric implies that 
\begin{equation} \label{E:Continflat1}
\mathcal{F}\lr{\dell\abs{\set{f+\delta \phi < 0}},\dell\abs{\set{f < 0}}} \leq \mathcal{H}^{n+1}(\{f < 0 < f + \delta\varphi\}). 
\end{equation}
Using the definition of Hausdorff measure, the compactness of $Z_f$ and the Lebesgue number lemma, there exists $\alpha_0 = \alpha_0(\ep) > 0$ such that
\begin{equation} \label{E:Continflat2}
\hcal^{n+1}\lr{ \{x \in M\ :\ d(x, Z_f) < \alpha \} } < \ep,\qquad \forall \alpha \leq \alpha_0.
\end{equation}
Since $f$ is uniformly continuous and 
\begin{equation} \label{E:Continflat3}
\{f < 0 < f + \delta\varphi\} \subset \{ |f| \leq \delta\norm{\varphi}_{L^{\infty}(M)}\},
\end{equation}
there exists $\delta_0 = \delta_0(\ep) > 0$ such that
\begin{equation} \label{E:Continflat4}
\{f < 0 < f + \delta\varphi\}  \subset  \{x \in M\ :\ d(x, Z_f) < \alpha_0/2 \} , \qquad \forall \delta \leq \delta_0. 
\end{equation}
Thus choosing $\delta < \delta_0$ shows that the left-hand side of \eqref{E:Continflat1} is at most $\ep$, which completes the proof.
\end{proof}

\subsection{$p$-Sweepouts} \label{SS:Psweepout} Let us recall the definition of a $p$-sweepout (see \cite[\textsection 3.7, \textsection 4.1]{marquesnevesexistence}). Firstly, a map $\Phi : S^1 \gives \mathcal Z_n\lr{M,\Z_2}$ is a \textit{sweepout} if it is continuous in the flat topology and the class $[\Phi] \in \pi_1(Z_n\lr{M,\Z_2})$ is non-zero. If we let $X$ denote a cubical subcomplex of $[0,1]^m$ for some $m$, then a continuous map $\Phi : X \gives \mathcal Z_n\lr{M,\Z_2}$ is an \textit{admissible $p$-sweepout} if there exists $\lambda\in H^1\lr{X,\Z_2}$ such that
  \begin{enumerate}[label=(\roman*), ref=(\roman*)]
  \item \label{psweepout1} For any $\gamma: S^1\gives X$ we have $\lambda\lr{\gamma}\neq 0$ if and only if $\Phi\circ \gamma:S^1\gives \mathcal Z_n\lr{M,\Z_2}$ is a sweepout;
  \item \label{psweepout2} The cup product $\lambda^p\neq 0\in H^p\lr{X,\Z_2}.$ 
  \item \label{psweepout3} With $B_r(p)$ denoting the ball of radius $r$ centered at $p$ in $M,$ we have
  \be 
  \limsup_{r\gives 0^+}\sup_{x\in X,\,\, p\in M} \norm{\Phi(x)}(B_r(p))=0.
  \ee 
  \end{enumerate}
  
\begin{remark} \label{R:Sweepout} We recall the content of \cite[Remark 4.2]{marquesnevesexistence} which says that if $\gamma$ and $\gamma'$ are homotopic in $X$, then $\Phi \circ \gamma$ is a sweepout if and only if $\Phi \circ \gamma'$ is a sweepout. \end{remark}

\subsection{Almgren's Isomorphism} \label{S:Almgreniso} In \cite{almgrenhomotopy}, Almgren constructed an isomorphism between $\pi_1(\mathcal{Z}_n(M,\Z_2))$ and $H_{n+1}(M,\Z_2)$. For the proof of Theorem \ref{T:Psweepout} we will need to know how to use Almgren's isomorphism to check when an element of $\pi_1(\mathcal{Z}_n(M,\Z_2))$ is non-zero (so we recall here just the essentials that are required to do that and refer the reader to \cite[\S 3]{marquesnevesexistence} or to the original paper \cite{almgrenhomotopy} for more information). Given a continuous map $\Phi : S^1 \to \mathcal{Z}_n(M,\Z_2)$, there exist $0 = s_0 < s_1 < \dots < s_K = 2\pi$, a constant $\rho = \rho(M) \geq 1$ and $A_j \in \mathcal{Z}_{n+1}(M,\Z_2)$ for $j=0,\dots,K-1$ such that 
\[
\dell A_j = \Phi(s_{j+1}) - \Phi(s_j), \qquad \mathbf{M}(A_j) \leq \rho \mathcal{F}(\Phi(s_{j+1}),\Phi(s_j)),
\]
and such that $\bigl[\sum_{j=0}^{K-1}A_j \bigr] \in H_{n+1}(M,\Z_2)$ only depends on the homotopy class of $\Phi$ (to see in general that $\sum_{j=0}^{K-1}A_j$ defines an element of $H_{n+1}(M,\Z_2)$, see \cite[\S 4.4.6]{federergmt}). Thus we may define
\be 
F_M(\Phi) := \Biggl[\sum_{j=0}^{K-1}A_j \Biggr] \in H_{n+1}(M,\Z_2).
\ee
The induced map $F_M : \pi_1(\mathcal{Z}_n(M,\Z_2)) \to H_{n+1}(M,\Z_2)$ is well-defined and an isomorphism. Moreover, the $A_j$ are unique in the following sense: There is a constant $\nu = \nu(M) > 0$ such that if $B_j \in \mathcal{Z}_{n+1}(M,\Z_2)$ for $j=0,\dots,K-1$ are such that $\mathbf{M}(B_j) \leq \nu$ and $\dell B_j =  \Phi(s_{j+1}) - \Phi(s_j)$, then $A_j = B_j$.

\subsection{Proof of Theorem \ref{T:Psweepout}}\label{S:SweepoutProof} For $\theta = [\theta_0 : \cdots : \theta_p] \in \R\mathbb{P}^p$ and $x \in M$, write $f^{\theta}(x) = \theta_0f_0(x) + \dots \theta_pf_p(x)$. By Theorem \ref{T:Main}, 
\[
\mathcal{H}^{n+1}(Z_{f^{\theta}}) = 0\qquad  \forall \theta \in \R\mathbb{P}^p.
\] 
Lemma \ref{L:Continflat} thus implies that $\Phi$ is continuous in the flat topology. The non-concentration estimate in Proposition \ref{T:NonConcentration}  also shows that $\Phi$ satisfies \ref{psweepout3}. Moreover, since $X$ is homeomorphic to $\R \mathbb P^p$ in our case, we know that $H^1(X,\Z_2) = H^p(X,\Z_2) = \Z_2$. This means that the generator $\lambda$ of $H^1(X,\Z_2)$ satisfies $\lambda^p \neq 0$ in $H^p(X,\Z_2)$, which shows that $\Phi$ satisfies \ref{psweepout2}. It therefore remains to check \ref{psweepout1} for which we need the following:

\begin{claim}
There exists a generator $\hat{\gamma}$ of $\pi_1(\R \mathbb{P}^p) = \Z_2$ for which $\Phi \circ \hat{\gamma} \neq 0 \in \pi_1(Z_n\lr{M,\Z_2})$.
\end{claim}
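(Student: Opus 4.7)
The plan is to exhibit a concrete generator $\hat\gamma$ of $\pi_1(\R\mathbb{P}^p) = \Z_2$ and to show that $F_M(\Phi \circ \hat\gamma)$ is the fundamental class of $M$, where $F_M$ is Almgren's isomorphism from \S \ref{S:Almgreniso}. For this, take
\[ \hat\gamma: [0,\pi] \to \R\mathbb{P}^p, \qquad \hat\gamma(s) = [\cos(s):\sin(s):0:\cdots:0]. \]
This loop lifts to the arc $s \mapsto (\cos s, \sin s, 0, \ldots, 0)$ in the double cover $S^p$, which joins antipodal points, hence $\hat\gamma$ represents the non-trivial class in $\pi_1(\R\mathbb{P}^p)$. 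Set $f^{(s)} := \cos(s) f_0 + \sin(s) f_1$ and $U_s := \{f^{(s)} < 0\}$, so that $\Phi(\hat\gamma(s)) = \dell |U_s|$. Observe that $U_0 = \{f_0 < 0\}$ and $U_\pi = \{f_0 > 0\}$ partition $M$ up to the set $Z_{f_0}$, which has $\hcal^{n+1}(Z_{f_0}) = 0$ by the finite vanishing order hypothesis (via Theorem \ref{T:Main} or Proposition \ref{P:Bar}).

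To compute $F_M(\Phi \circ \hat\gamma)$, choose a partition $0 = s_0 < s_1 < \cdots < s_K = \pi$ of $[0,\pi]$ and define
\[ B_j := |U_{s_j}| + |U_{s_{j+1}}| \in \mathcal Z_{n+1}(M, \Z_2). \]
Then $\dell B_j = \Phi(\hat\gamma(s_j)) + \Phi(\hat\gamma(s_{j+1}))$, and $\mathbf{M}(B_j) = \hcal^{n+1}(U_{s_j} \Delta U_{s_{j+1}})$. Since $(s,x) \mapsto f^{(s)}(x)$ is uniformly continuous on the compact set $[0,\pi]\times M$, the symmetric difference $U_{s_j} \Delta U_{s_{j+1}}$ is contained in a tubular neighborhood of $Z_{f^{(s_j)}}$ of thickness $O(|s_{j+1}-s_j|)$, and the argument from Lemma \ref{L:Continflat} (using $\hcal^{n+1}(Z_{f^{(s_j)}}) = 0$) shows this neighborhood has $\hcal^{n+1}$-measure tending to $0$ uniformly in $s_j$ as $|s_{j+1}-s_j|\to 0$. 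Refining the partition, we may therefore assume $\mathbf{M}(B_j) \leq \nu$ for every $j$, where $\nu$ is the uniqueness constant of Almgren's isomorphism.

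By that uniqueness, the chains $A_j$ appearing in the definition of $F_M$ satisfy $A_j = B_j$, and thus telescoping mod $2$ yields
\[ F_M(\Phi \circ \hat\gamma) = \Biggl[\sum_{j=0}^{K-1} B_j \Biggr] = \bigl[|U_0| + |U_\pi|\bigr] = [|M|] \in H_{n+1}(M,\Z_2), \]
because all intermediate terms $|U_{s_j}|$ with $1 \leq j \leq K-1$ appear twice and cancel, while $U_0 \cup U_\pi = M \setminus Z_{f_0}$ differs from $M$ by a set of $\hcal^{n+1}$-measure zero. Since $M$ is a closed $(n+1)$-manifold, $[|M|]$ is the non-zero fundamental class in $H_{n+1}(M,\Z_2) = \Z_2$, so $\Phi \circ \hat\gamma$ represents a non-trivial element of $\pi_1(\mathcal Z_n(M,\Z_2))$. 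The main obstacle is securing the mass bound $\mathbf{M}(B_j) \leq \nu$, which reduces to the non-concentration statement that $\hcal^{n+1}$ of a small tubular neighborhood of a nodal set can be made arbitrarily small; once this is in hand, the computation is essentially a formal telescoping in $\Z_2$.
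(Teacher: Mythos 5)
Your proposal is correct and follows essentially the same strategy as the paper: choose the loop $s\mapsto[\cos(s):\sin(s):0:\cdots:0]$ (the paper parametrizes the same loop over $[0,2\pi]$ via $s/2$), apply Almgren's isomorphism, identify the filling chains as symmetric differences of the open sets $U_s=\{f^{(s)}<0\}$, and telescope mod $2$ to get the fundamental class $[M]\in H_{n+1}(M,\Z_2)$. Your version is somewhat more explicit than the paper's about verifying the mass bound $\mathbf{M}(B_j)\leq\nu$ needed to invoke the uniqueness clause $A_j = B_j$ in Almgren's construction, but the content is the same.
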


\noindent Assuming this for the moment, we will prove \ref{psweepout1}. Let $\gamma : S^1 \to X$ be a continuous map. Note that since $X$ is homeomorphic to $\R\mathbb{P}^p$, this defines an element $[\gamma] \in \pi_1(\R\mathbb{P}^p)$. Now, if $\lambda(\gamma)\neq 0$, then $[\gamma] \neq 0$, which means that $[\gamma] = [\hat{\gamma}]$. Using Remark \ref{R:Sweepout} followed by the claim, this implies that $\Phi_*([\gamma])=\Phi_*([\hat{\gamma}]) \neq 0$, \emph{i.e.} $\Phi\circ\gamma$ is a sweepout. Conversely, if $\Phi\circ\gamma$ is a sweepout, then it must be the case that $[\gamma] \neq 0$, which implies that $\leb(\gamma) \neq 0$.

To prove the claim, consider the continuous map $\hat{\gamma} : S^1\gives \R\mathbb{P}^p$ given by 
\begin{equation*}
\hat{\gamma}(s)=[\cos(s/2),\sin(s/2):0:\cdots :0].
\end{equation*}
Therefore Almgren's isomorphism (Section \ref{S:Almgreniso}) implies that there exist $0 = s_0 < s_1 < \cdots <  s_K = 2\pi$ such that the class $[\Phi \circ \hat{\gamma}]$ is non-zero in $\pi_1\lr{\mathcal Z_n\lr{M,\Z_2}}$ if and only if
\begin{equation}
\sum_{j=0}^{K-1} \Biggl[ \set{p \in M\setminus Z_{f_0} : -\cot(s_j/2)< \frac{f_1}{f_0} < -\cot(s_{j+1}/2)} \Biggr] \in H_{n+1}\lr{M,\Z_2} \label{E:AlmgrenSum}
\end{equation}
Since we know that $\mathcal{H}^{n+1}(Z_{f^{\hat{\gamma}(s)}}) = 0$ for every $s \in S_1$ (by Theorem \ref{T:Main}), the sum above is equal to $[M]$,  which generates $H_{n+1}(M,\Z_2)$ and is therefore non-zero. This completes the proof of Theorem \ref{T:Psweepout}. \qed
\phantom\qedhere

\medskip

\subsection{Proof of Corollary \ref{C:Weakhomequiv}}\label{S:HomProof}  It can be shown that $\R\mathbb{P}^{\infty}$ and $\mathcal{Z}_n(M,\Z_2)$ are both weakly homotopically equivalent to the Eilenberg-MacLane space $K(\Z_2,1)$ (this means that they are connected, with $\pi_1 \simeq \Z_2$ and $\pi_k = 0$ for $k > 1$). So to establish a weak homotopy equivalence we only need to establish firstly continuity of the map (which follows from the previous arguments), and secondly that a generator of $\pi_1(\R\mathbb{P}^{\infty})$ is mapped to a generator of $\pi_1(\mathcal{Z}_n(M,\Z_2))$. But this is exactly what the argument above shows: We can pick $\hat{\gamma}$ as our generator of $\pi_1(\R\mathbb{P}^{\infty}) $ and then using the Almgren isomorphism we see that its image in $\pi_1(\mathcal{Z}_n(M,\Z_2))$ is non-trivial; and every non-trivial element is a generator. \qed
\\
\\
\subsection{Proof of Theorem \ref{T:UpperBound}}\label{S:UBPf} Fix a compact smooth manifold $M.$ Classical theorems of Whitney \cite[Theorems 1, 4]{whitneydifferentiable} guarantee the existence of a smooth diffeomorphism $\mathcal J:M\gives N$ between $M$ and a real analytic submanifold $N$ of Euclidean space, which of course admits a real analytic metric. 

Denote by $V_p$ the span of the first $p+1$ eigenfunctions of the Laplacian for this real analytic metric on $N.$ By \cite[Theorem 14.3]{jerisonlebeaunodal},
\begin{align} \label{E:UpperBound1}
\sup_{f\in V_p\backslash\{0\}} \hcal^{n}(Z_{f}) \leq Cp^{\frac{1}{n+1}},
\end{align}
where $C$ depends only on $N$. Moreover, Theorem \ref{T:Psweepout} shows that
\begin{align*}
\Phi_p\ :\ \R\mathbb{P}^p\ &\to\ \mathcal{Z}_n(N,\mathbb{Z}_2)\\
[\theta_0:\dots:\theta_p]\ &\mapsto\ \partial\{\theta_0\psi_0  + \dots + \theta_p\psi_p< 0\}
\end{align*}
is an admissible $p$-sweepout for all $p$. Composing $\Phi_p$ with the pullback $\mathcal{J}^{\ast}$ gives an admissible $p$-sweepout on $M$. Since
\begin{align*}
\mathbf{M}\left(\dell\{f<0\}\right) \leq \hcal^{n}(Z_{f})
\end{align*}
for all $f\in V_p\backslash\{0\}$, the estimate in \eqref{E:UpperBound1} completes the proof of the theorem. \qed

  \bibliographystyle{amsalpha}
 \bibliography{Referencesgeompde}

\end{document}